\documentclass[10t,a4paper]{amsart}

\usepackage[english]{babel}
\usepackage[T1]{fontenc}
\usepackage[utf8]{inputenc}

\usepackage{amscd,amssymb,amsopn,amsmath,amsthm,graphics,amsfonts,enumerate,verbatim,calc}
\usepackage{amssymb, tikz}
\usepackage{mathtools}

\usepackage{hyperref}
\usepackage[capitalise]{cleveref}

\usepackage{setspace}
\onehalfspacing

\setlength{\textwidth}{16.30cm}
\setlength{\hoffset}{-1.5cm}

\usepackage{mathpazo}
\usepackage{color}
\usepackage{latexsym}
\usepackage{amsthm,amsfonts,amssymb,mathrsfs}
\usepackage{rotating}
\usepackage[leqno]{amsmath}
\usepackage{xspace}
\usepackage[all]{xy}
\usepackage{longtable}
\textwidth=16.cm \textheight=23 cm \topmargin=0.00cm
\oddsidemargin=1.00cm \evensidemargin=1.00cm \headheight=20.9pt
\headsep=5 mm \numberwithin{equation}{section}
\hyphenation{semi-stable} \emergencystretch=10pt

\newtheorem{theorem}{Theorem}[section]
\newtheorem{lemma}[theorem]{Lemma}

\newtheorem{corollary}[theorem]{Corollary}

\newtheorem{problem}[theorem]{Problem}
\theoremstyle{definition}
\newtheorem{definition}[theorem]{Definition}
\theoremstyle{remark}

\newtheorem{fact}[theorem]{Fact}
\newtheorem{example}[theorem]{Example}
\newtheorem{observation}[theorem]{Observation}

\newtheorem{question}[theorem]{Question}
\newtheorem{conjecture}[theorem]{Conjecture}
\newtheorem{hypothesis}[theorem]{Hypothesis}

\newcommand{\Ass}{\operatorname{Ass}}

\newcommand{\grade}{\operatorname{grade}}

\newcommand{\Spec}{\operatorname{Spec}}

\newcommand{\rad}{\operatorname{rad}}
\newcommand{\Tr}{\operatorname{Tr}}

\newcommand{\Ht}{\operatorname{ht}}
\newcommand{\pd}{\operatorname{pd}}

\newcommand{\Syz}{\operatorname{Syz}}

\newcommand{\Gdim}{\operatorname{Gdim}}

\newcommand{\V}{\operatorname{Var}}

\newcommand{\id}{\operatorname{id}}
\newcommand{\Ext}{\operatorname{Ext}}

\newcommand{\Supp}{\operatorname{Supp}}

\newcommand{\Tor}{\operatorname{Tor}}

\newcommand{\Hom}{\operatorname{Hom}}

\newcommand{\Ann}{\operatorname{Ann}}
\newcommand{\Hann}{\operatorname{h-Ann}}

\newcommand{\depth}{\operatorname{depth}}

\newcommand{\lo}{\longrightarrow}
\newcommand{\fm}{\frak{m}}
\newcommand{\fp}{\frak{p}}

\begin{document}

\author[]{Mohsen Asgharzadeh}

\address{}
\email{mohsenasgharzadeh@gmail.com}

\title[ ]
{homology of conductor}

\subjclass[2010]{ Primary 13B22; 13D45.}
\keywords{annihilator; conductor ideal; integral closure; homological methods; parameter sequence}

\begin{abstract} 
We  study some homological properties of the conductor ideal.
\end{abstract}

\maketitle
\tableofcontents

\section{Introduction}

 The main theme of this paper
is about of the ideal of definition of the normal locus: $$\V(\mathfrak{C}):=\{\fp\in\Spec(R):R_{\fp} \emph{ is not normal}\},$$if it is exists. Under some mild conditions, for example the analytically unramified condition, it is closed in Zariski topology. Namely, and up to radical, we are interested in  $\mathfrak{C}:=\mathfrak{C}_R:=\Ann_R(\frac{\overline{R}}{R})$ and we refer it the conductor ideal. There are some  forgotten conjectures in this area:
\begin{conjecture}\label{1.7}(See \cite[Page 51]{mat2}).
	Suppose $R$ has no nilpotent. Then  $\frac{\overline{R}}{R}$ is $h$-reduced.
\end{conjecture}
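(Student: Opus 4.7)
The plan is to reduce Conjecture~\ref{1.7} to the local case and then exploit the ambient structure of the total quotient ring together with Serre's normality criterion. Since $R$ is reduced and, say, analytically unramified so that $\overline{R}$ is module-finite over $R$, we have $\overline{R}\subseteq Q(R)$ and, after localizing at a candidate $h$-associated prime $\fp$ of $\overline{R}/R$, one is reduced to a reduced local ring with its module-finite normalization at hand.

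First I would spell out what $h$-reducedness amounts to in concrete terms: presumably $M$ is $h$-reduced precisely when $\hass_R(M)\subseteq \Min_R(\Hann_R(M))$. Granted this, the conjecture becomes the assertion
\[
\hass_R\!\bigl(\overline{R}/R\bigr)\;\subseteq\;\Min_R(\mathfrak{C}_R).
\]
To verify it I would pick $\fp\in\hass_R(\overline{R}/R)$ and run the contrapositive: assume $\fp$ is not minimal over $\mathfrak{C}_R$ and derive a contradiction. After localising at $\fp$, the inductive hypothesis that $R_\fq$ is already normal for every $\fq\subsetneq\fp$ with $\mathfrak{C}_R\subseteq\fq$ would, via Serre's $(R_1)+(S_2)$ criterion, force $R_\fp$ itself to be normal, contradicting $\fp\in\Supp(\overline{R}/R)$.

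The principal mechanism driving the argument is the short exact sequence
\[
0\lo R\lo\overline{R}\lo\overline{R}/R\lo 0,
\]
together with the normality of $\overline{R}$, which controls $\depth$ and $\Ext$ and thus feeds back into the homological definition of $h$-$\Ass$. Reducedness of $R$ enters through the fact that height-one primes of $R$ lift to height-one primes of $\overline{R}$, where $\overline{R}$ is a discrete valuation ring by Krull--Akizuki, so the conductor is well behaved in codimension one; this handles the case $\Ht(\fp)=1$ and also supplies the base of any dimension-type induction.

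The hardest step will be controlling embedded behaviour when Serre's $(S_2)$ fails on $R$: even though $R$ is reduced, the quotient $\overline{R}/R$ can acquire ordinary embedded associated primes, and the whole point of passing to $h$-$\Ass$ is precisely to discard these. Making this rigorous requires showing that at a hypothetical non-minimal $h$-associated prime $\fp$ of $\overline{R}/R$ the relevant $\Ext$ (or local cohomology) obstruction actually vanishes. This is where the substance of the conjecture sits, and I expect that one genuinely needs either the analytically unramified hypothesis or a dimension/depth restriction on $R_\fp$ to close the argument; without such a restriction there is a real danger that the embedded primes one is trying to rule out can be manufactured from $(S_2)$-failure in $R$ itself.
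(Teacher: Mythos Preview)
The statement you are attempting to prove is labelled \emph{Conjecture} in the paper, and the paper does not supply a proof; it merely records the statement (attributed to Matlis) and points to \cite{weak} for partial results. There is therefore no ``paper's own proof'' to compare against: this is an open problem, not a theorem of the paper.

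Your proposal is not a proof either, and you essentially say so yourself in the final paragraph: the passage from ordinary associated primes to $h$-associated primes is exactly the content of the conjecture, and you concede that closing the argument at a non-minimal $h$-associated prime ``genuinely needs either the analytically unramified hypothesis or a dimension/depth restriction.'' That is an honest assessment, but it means the sketch is a plan of attack rather than an argument. Moreover, you begin by guessing at the definition of $h$-reduced (``presumably $M$ is $h$-reduced precisely when $\hass_R(M)\subseteq \Min_R(\Hann_R(M))$''); Matlis's terminology in \cite{mat2} comes from the theory of cotorsion modules, and you should pin down the precise definition before building an inductive strategy around it. Without the correct definition in hand, the reduction to $\hass_R(\overline{R}/R)\subseteq\Min_R(\mathfrak{C}_R)$ is unjustified, and the subsequent localisation-and-Serre-criterion outline has no firm target.
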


We record \cite{weak} to see some partial answers. The other one was asked by Ikeda \cite{i}:

\begin{problem}\label{1.7} Let $\underline{x}$ be a system of parameter. When is $\mathfrak{C}\nsubseteq \underline{x}R$? 
\end{problem} Also,  it stated as an open problem in the book of Huneke-Swanson when the ring is Cohen-Macaulay, one may deal with the middle (see Conjecture \ref{41}).
We checked Problem \ref{1.7} in a lot of isolated examples, some of them are presented in Section 4.
The best answer to this is due to Ikeda over any complete Gorenstein integral domain. We present a new proof of this:

\textbf{Theorem A.} Adopt one of the following assumptions:
\begin{enumerate}
\item[i)] Suppose $R$ is quasi-Gorenstein. 
\item[ii)]  Suppose $\depth(R)=1$. 
\item[iii)] Suppose $R$ is a two-dimensional Buchsbaum ring.
\item[iv)] Suppose $\fm=\mathfrak{C}+y R$ for some $y$. 
\end{enumerate}
Then $\mathfrak{C}\nsubseteq \underline{x}R$ where $\underline{x}$  is a system of parameter.
	
We mention that there are several non-trivial examples for which $\fm=\mathfrak{C}+y R$. Next, we deal with:

\begin{conjecture}
$\mathfrak{C}_R$ can be used as a test object for any homological properties of rings.
\end{conjecture}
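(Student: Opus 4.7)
Since the statement is a vague meta-conjecture rather than a precise theorem, the plan is to first refine it into a collection of testable precise statements and then attack each via the fundamental short exact sequence
$$0 \lo R \lo \overline{R} \lo \overline{R}/R \lo 0. \qquad (\ast)$$

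The first step is to fix a menu of homological properties of $R$ to serve as benchmarks: $\depth$, the Cohen--Macaulay, Gorenstein, and quasi-Gorenstein conditions, finiteness of $\Gdim$ and $\CI$, and regularity. For each property $P$ I would seek a companion condition $P'$ expressible purely in terms of $\mathfrak{C}_R$, such that $R$ satisfies $P$ if and only if $\mathfrak{C}_R$ satisfies $P'$. Natural candidates for $P'$ are numerical invariants of a minimal free resolution of $\mathfrak{C}_R$ (or of $R/\mathfrak{C}_R$), vanishing ranges of $\Ext^i_R(R/\mathfrak{C}_R,R)$, the grade invariants $\cgrade$ and $\Cgrade$ already employed in the paper, or finiteness of $\Gdim(R/\mathfrak{C}_R)$.

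The technical engine should be the long exact sequences obtained from $(\ast)$ by applying $\Hom_R(-,R)$, $-\otimes_R k$, and local cohomology $\hh^i_\fm(-)$. Because $\overline{R}/R$ is annihilated by $\mathfrak{C}_R$, it is canonically an $R/\mathfrak{C}_R$-module; this is the mechanism that transports homological data between $R$ and $\mathfrak{C}_R$. The four hypotheses of Theorem A are precisely the situations in which $(\ast)$ collapses enough to make the transport clean, so I would take them as the initial test cases and work outward, isolating the minimal hypotheses under which each benchmark property is detected by a condition on $\mathfrak{C}_R$.

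The principal obstacle is the imprecision of the phrase \emph{any homological property}. Taken literally the conjecture is false: homological invariants sensitive only to modules supported away from $\V(\mathfrak{C}_R)$ cannot possibly be read off the conductor. The real work therefore lies in identifying the correct class of ``normal-locus-sensitive'' invariants and showing that $\mathfrak{C}_R$ plays a universal role for them, probably in a derived-category or Grothendieck-group formulation. I would expect the first genuine theorem in this direction to read: $R$ is normal if and only if $\mathfrak{C}_R=R$, and the standard gradations of singularity (CM, Gorenstein, complete intersection) correspond to explicit invariants of a minimal free resolution of $R/\mathfrak{C}_R$, with $(\ast)$ supplying the conversion dictionary.
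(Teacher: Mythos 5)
You should first note that the paper offers no proof of this statement: it is labelled a conjecture and functions as a guiding slogan, with Theorem B (Lemma \ref{op}, Observations \ref{2p}, \ref{oi}, \ref{ec}, \ref{g} and the tensor-power and $\Ext^i(\mathfrak{C},R)$ results) supplied as partial evidence. So there is nothing in the paper for your proposal to match step-for-step, and your own observation that the literal statement is too vague to admit a proof is correct and consistent with how the paper treats it. Judged as a research programme, however, your plan points in a genuinely different direction from the paper's. You propose to seek equivalences ``$R$ satisfies $P$ iff $\mathfrak{C}_R$ satisfies $P'$'' by running long exact sequences out of $0\to R\to\overline{R}\to\overline{R}/R\to 0$. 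The paper's actual evidence is one-directional and rigidity-flavoured: any good homological behaviour of $\mathfrak{C}$ (finite projective dimension, finite injective dimension, finite G-dimension, $\Ext^i_R(\mathfrak{C},\mathfrak{C})=0$, torsion-free tensor powers) forces $R$ to be normal, whence $\mathfrak{C}=R$ and the hypothesis becomes vacuous. This is ``test module'' in the same sense that the residue field tests regularity, not in the sense of a conversion dictionary between singularity types of $R$ and resolution invariants of $R/\mathfrak{C}_R$.

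The concrete gap is that your proposed engine would not drive the paper's results. The conductor exact sequence plays only a supporting role in Section 2; the arguments that actually close the loop are: (a) localization at height-one primes together with the compatibility $(\mathfrak{C}_R)_\fp=\mathfrak{C}_{R_\fp}$; (b) the special structure of $\mathfrak{C}$ in dimension one --- it is a common ideal of $R$ and $\overline{R}$, hence integrally closed and $\fm$-primary, and not contained in any proper principal ideal --- which converts ``free'' into ``unit ideal''; (c) heavy external theorems, namely Roberts' New Intersection Theorem (to drop $(S_2)$ in Observation \ref{2p}), the Peskine--Szpiro--Roberts resolution of Bass' conjecture (Observation \ref{oi}), the Auslander--Bridger four-term exact sequence involving $\Tr(\Syz_i(\mathfrak{C}))$, and the Corso--Huneke--Katz--Vasconcelos theorem that Tor-vanishing against an integrally closed $\fm$-primary ideal bounds projective dimension. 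None of these appear in your plan, and without (b) and (c) the transport of information from $\mathfrak{C}$ back to $R$ does not happen: knowing that $\overline{R}/R$ is an $R/\mathfrak{C}$-module tells you nothing until you know $\mathfrak{C}$ localizes to an integrally closed ideal primary to the maximal ideal in dimension one. (A small factual slip: the grade invariants $\cgrade$ and $\Cgrade$ are defined in the preamble but never used in the paper, so they are not ``already employed'' as you assert.) If you want to pursue the conjecture, the productive refinement is the one the paper implicitly adopts: show that each finiteness or vanishing condition on $\mathfrak{C}$ is only satisfiable when $R$ is already normal.
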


\textbf{Theorem B.} The following assertions are valid.
\begin{enumerate}
\item[i)]  If $\pd_R(\mathfrak{C})<\infty$, then $R$ is normal.
\item[ii)] If $\id_R(\mathfrak{C})<\infty$, then $R$ is normal. In particular, $R$ is Gorenstein.	 
\item[iii)] Suppose $R$ is $(S_2)$. If $\Ext_R^i(\mathfrak{C},\mathfrak{C})=0$ for some $i>0$, then
	$\mathfrak{C}$ is free. In particular,	$R$ is normal. 
\item[iv)]  If $\mathfrak{C}^{i\otimes}$  is torsion-free for some $i>1$ and $R$ is $(S_2)$, then
$\mathfrak{C}$ is free. In particular,	$R$ is normal.
\item[v)] If $R$ is one-dimensional, and $\Ext_R^i(\mathfrak{C},R)=0$ for some $i>0$
then $R$ is Gorenstein.
 
\end{enumerate}
Among other things, part i) uses the Roberts' intersection theorem. It is not so adjective any use of Bass' conjecture in the second item. Also, part iii) is related to a conjecture of
Auslander-Reiten.
Recall that part iv)  is a special case of
a conjecture by Vasconcelos, which is widely open.  We think the use of $(S_2)$-condition   is superficial. In this regard, we replace it with the seminormailty or even with quasi-Buchsbaum condition. Also, we present the Gorenstein analogue of i), see Observation \ref{g}. Part v) extends a result of
Ulrich \cite{U} and Rego \cite[main result]{Re}, as they worked with $\Ext_R^1(\mathfrak{C},R)=0$. It may be nice to note
that Rego used the compactified
Jacobian of a singular curve.
 In addition, we present a higher dimensional analogue. Reflexivity of $\overline{R}$
 seems more subtle, as we connect to initial Bass' numbers.
 In this regard, let $R$ be a 2-dimensional Buchsbaum ring which is $(R_1)$. In Fact \ref{ref} we show  $\overline{R}$ is reflexive iff $\mu^1(\fm ,R)\leq1$.

Commuting annihilator of modules is a difficult
task, and this become more subtle if the module under consideration is involved in the integral closure, as it is difficult to compute $\overline{R}$ too.
In this regard, there are several (co-)homological approaches to approximates the  annihilator. Namely, Vaconcelos' homological annihilator  denoted by $\Hann(-)$, and Bridger-Roberts' cohomological annihilator  denoted by $\Ann^h(-)$. For the precise definitions, see Section 3. We note these 
  (co-)homological approaches are not necessarily equal to $\Ann_R(-)$.
 In Section 3 we show:

\textbf{Theorem C.} The following assertions are valid.
	\begin{enumerate}
		\item[i)]Let $R$ be  a 2-dimensional Gorenstein ring. Then $\Hann(\frac{\overline{R}}{R})=\Ann_R(\frac{\overline{R}}{R})$.	\item[ii)] Let $R$ be  Cohen-Macaulay. Then
	  $\mathfrak{C}_R\subseteq \Ann^h(\overline{R})$. If $R$ is not normal, then $\rad \mathfrak{C}_R \neq \rad \Ann^h(\overline{R})$.
			\item[iii)]
		Let $R$ be  as ii). Then $\mathfrak{C}_R\subseteq \Ann\Tor^R_+(\overline{R},R/\underline{x}R)=0$.
			 \item[iv)] Let $R$ be  a 2-dimensional analytically unramified ring satisfying $(R_1)$. Then $\mathfrak{C}_R=\Ann(H^1_\fm(R))$.\item[v)]If $R$ is Gorenstein and $\overline{R}$ is Cohen-Macaulay, then $\mathfrak{C}_R$ is maximal Cohen-Macaulay. 
	\end{enumerate}

Concerning iii), it is true in slightly more general setting than be Cohen-Macaulay. Also, it is trivial if $i>1$ or even $\overline{R}$ is Cohen-Macaulay, and  maybe there is a related result from literature. Namely, over complete 1-dimensional integral domains, and for any maximal Cohen-Macaulay module $M$,  Wang proved that 
$\mathfrak{C}_R\Tor^R_1(M,-)=0$, see \cite[3.1+1.5]{wa}. Concerning v), and when the ring is standard graded over a field, it is proved in  \cite{Ei}  independently by 
Eisenbud and Ulrich.

We follow the book of Huneke-Sawnson \cite{HS} for all unexplained notations. Also, see the books \cite{BH,mat} as classical references.

\section{$\mathfrak{C}$ as a test module}

Let $(R,\fm)$ be a commutative noetherian local ring, and let $\overline{R}$ be the normalization of $R$ in the total ring of fraction. Also, $\underline{x}$  is a system of parameter for $R$.\begin{hypothesis}
	We assume in addition that the ring is analytically unramified. \end{hypothesis}
Recall that
Serre's $(S_n)$ condition  indicates 
$\depth(R_{\fp})\geq\min\{n,\Ht(\fp)\}$.
Serre's $(R_n)$ condition
means $R_{\fp}$ is regular for all prime ideal $\fp$ of height at most $n$. 

\begin{fact}(Serre)\label{sc}. $R$ is normal iff it satisfies   $(S_2)$ and $(R_1)$.\end{fact}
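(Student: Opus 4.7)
The plan is to treat the two directions separately: the forward implication is essentially a DVR exercise, while the converse carries the real content and uses the analytically unramified hypothesis in an essential way.

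For the forward direction, if $R$ is normal then so is every localization $R_\fp$. When $\Ht(\fp)=1$, $R_\fp$ is a one-dimensional normal local ring, hence a DVR, hence regular, giving $(R_1)$. For $(S_2)$, I would invoke the classical description of a normal noetherian ring as the intersection (inside its total quotient ring) of its localizations at height one primes, which readily extends any nonzerodivisor to a regular sequence of length two at primes of height $\geq 2$.

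For the converse, assume $(R_1)$ and $(S_2)$. Since $R$ is analytically unramified, $\overline{R}$ is finite over $R$, so the cokernel $C := \overline{R}/R$ is a finitely generated $R$-module. I would show $C = 0$ by contradiction. Suppose $\fp \in \Ass_R(C)$. If $\Ht(\fp) \leq 1$, then $(R_1)$, together with reducedness coming from the analytically unramified assumption, makes $R_\fp$ a DVR or a field, hence normal, so $C_\fp = 0$, contradicting $\fp \in \Ass C$. Therefore $\Ht(\fp) \geq 2$, and $(S_2)$ forces $\depth R_\fp \geq 2$. Localizing the short exact sequence
\[
0 \to R \to \overline{R} \to C \to 0
\]
at $\fp$, the module $\overline{R}_\fp$ is finitely generated and torsion-free over $R_\fp$ (it embeds in the total quotient ring), so $\depth \overline{R}_\fp \geq 1$. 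The depth lemma then yields
\[
\depth C_\fp \;\geq\; \min\{\depth R_\fp - 1,\; \depth \overline{R}_\fp\} \;\geq\; 1,
\]
contradicting the fact that $\fp R_\fp \in \Ass C_\fp$ forces $\depth C_\fp = 0$. Hence $C = 0$ and $R = \overline{R}$.

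The main obstacle I anticipate is the depth estimate for $\overline{R}_\fp$: this is where the analytically unramified hypothesis does real work, guaranteeing $\overline{R}$ is module-finite over $R$ so that torsion-freeness gives a usable depth bound and the depth lemma applies cleanly. Without that finiteness, one cannot run the above associated-prime argument in the stated form.
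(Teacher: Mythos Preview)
The paper does not prove this statement at all; it is recorded as a classical fact due to Serre and invoked as a black box throughout. Your argument is correct within the paper's standing hypotheses.

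One remark: your converse direction leans on the analytically unramified assumption to secure module-finiteness of $\overline{R}$ so that the depth lemma applies. That is legitimate here, but Serre's criterion holds for arbitrary noetherian rings and the textbook proof avoids $\overline{R}$ entirely: from $(R_1)+(S_2)$ one first gets reducedness via $(R_0)+(S_1)$, then for any nonzerodivisor $b$ the condition $(S_2)$ forces every associated prime of $R/bR$ to have height one, and $(R_1)$ makes each such localization a DVR, so an integral element $a/b$ already lies in $R_\fp$ for every $\fp\in\Ass(R/bR)$ and hence in $R$. Your depth-lemma route is a pleasant alternative when finiteness of the normalization is on the table, as it is throughout this paper, but it does not recover the full generality of the cited fact.
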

 By $\pd_R(-)$ we mean the projective dimension  of $(-)$.
\begin{lemma}\label{op}Suppose $R$ is $(S_2)$.
	If $\pd_R(\mathfrak{C})<\infty$, then
	$R$ is normal. 
\end{lemma}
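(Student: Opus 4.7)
The plan is to derive the $(R_1)$-condition from the hypotheses; since $R$ is already $(S_2)$, Serre's criterion (Fact~\ref{sc}) will then deliver normality. Argue by contradiction and assume $R$ is not normal, so $\mathfrak{C}\neq R$. Because $R$ is analytically unramified and hence reduced, the non-normal locus $\V(\mathfrak{C})$ avoids the minimal primes of $R$ (localizations at those are fields), so every minimal prime of $\mathfrak{C}$ has height at least one. If all such minimal primes had height at least two, then $R$ would already be $(R_1)$, contradicting the failure of normality. Hence one can choose a height-one prime $\fp\in\Min\V(\mathfrak{C})$, and everything will be reduced to this localization.

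Next, localize at $\fp$. Since $\overline{R}$ is module-finite over $R$ (by analytic unramifiedness), the conductor commutes with localization: $\mathfrak{C}_\fp$ is the conductor of $R_\fp$ in $\overline{R_\fp}$, and the hypothesis $\pd_R(\mathfrak{C})<\infty$ passes to $\pd_{R_\fp}(\mathfrak{C}_\fp)<\infty$. The $(S_2)$-hypothesis at $\fp$ forces $\depth R_\fp=1$, so $R_\fp$ is Cohen-Macaulay of dimension one. Since $\fp$ is minimal over $\mathfrak{C}$, the ideal $\mathfrak{C}_\fp$ is $\fp R_\fp$-primary and therefore contains a non-zero-divisor, so $\depth\mathfrak{C}_\fp=1$. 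Auslander-Buchsbaum (whose strong form in this generality is Roberts' new intersection theorem applied to $R_\fp/\mathfrak{C}_\fp$) then forces $\pd_{R_\fp}\mathfrak{C}_\fp=0$, so $\mathfrak{C}_\fp$ is $R_\fp$-free. Any two non-zero elements $a,b$ of an ideal satisfy the relation $ba-ab=0$, hence a free ideal must be cyclic, and freeness of $cR_\fp$ in turn forces $c$ to be a non-zero-divisor.

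The conclusion uses the defining property of the conductor, namely that $\mathfrak{C}$ is an ideal of $\overline{R}$ as well as of $R$; consequently $\overline{R_\fp}\cdot\mathfrak{C}_\fp=\mathfrak{C}_\fp=cR_\fp$, and cancelling the non-zero-divisor $c$ yields $\overline{R_\fp}=R_\fp$, contradicting the choice of $\fp$ inside the non-normal locus. The main point I expect to verify carefully is the first reduction: one must combine the $(S_2)$-condition with the failure of $(R_1)$ to produce a height-\emph{exactly}-one minimal prime of $\mathfrak{C}$, and to confirm that $\mathfrak{C}$ localizes to the conductor of $R_\fp$. Once that setup is in place, the homological ingredient shrinks to a single Auslander-Buchsbaum calculation, and the final contradiction is essentially a one-line algebraic manipulation.
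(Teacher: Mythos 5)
Your proposal is correct and follows essentially the same route as the paper's proof: localize at a height-one prime, use the $(S_2)$/depth bound together with Auslander--Buchsbaum to conclude $\mathfrak{C}_\fp$ is free, hence principal on a non-zero-divisor, and then deduce $\overline{R_\fp}=R_\fp$ before invoking Serre's criterion. The only cosmetic differences are that you argue by contradiction via a minimal prime of $\V(\mathfrak{C})$ and replace the paper's citation of \cite[Ex.~12.4]{HS} with the direct cancellation $c\overline{R_\fp}=cR_\fp$, which is a perfectly adequate substitute.
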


\begin{proof}
Let $\fp$ be in $\Spec(R)$ of height one.
Recall that $\mathfrak{C}=\Hom_R(\overline{R},R)$. Since $R$ is analytically unramified, $\overline{R}$ is finitely generated as an $R$-module. In particular,
$$\Hom_R(\overline{R},-)_\fp	=\Hom_{R_\fp}(\overline{R}_\fp,-_\fp)=\Hom_{R_\fp}(\overline{R_\fp},-_\fp).$$From this, $(\mathfrak{C}_R)_\fp=\mathfrak{C}_{R_\fp}$. So,   projective dimension
of $(\mathfrak{C}_R)_\fp$ is  finite. Recall that
$$\pd_{R_\fp}(\frac{R_\fp}{(\mathfrak{C}_R)_\fp})\leq  \depth(R_\fp)\leq \dim(R_\fp)=1,$$and consequently, $(\mathfrak{C}_R)_\fp=\mathfrak{C}_{R_\fp}$ is free.
In particular, it should be a principal ideal. Recall from \cite[Ex. 12.4]{HS} that $\mathfrak{C}_{R_\fp}$ is not subset of any proper principal ideal. In sum, $\mathfrak{C}_{R_\fp}={R_\fp}$.
 This implies
${R_\fp}$ is normal. By dimension consideration,
${R_\fp}$ is regular. Following Fact \ref{sc},
$R$ is normal.
\end{proof}

Let us drop the $(S_2)$ condition.
\begin{observation}\label{2p} If $\pd_R(\mathfrak{C})<\infty$, then
	$R$ is normal.
\end{observation}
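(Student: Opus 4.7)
The plan is to argue by contradiction and reduce everything to the setting of Lemma~\ref{op} prime by prime. First I would handle height-one localizations directly, where $(S_2)$ is automatic, and then promote the remaining higher-codimensional minimal primes of $V(\mathfrak{C})$ to the $(S_2)$ setting via the new intersection theorem of Peskine--Szpiro--Roberts.

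Suppose for contradiction that $R$ is not normal, and let $\fp \in \Spec(R)$ be of height one. Since the analytically unramified hypothesis makes $R$ reduced, no prime of height one is embedded, so $\depth(R_\fp) = 1 = \dim(R_\fp)$ and $R_\fp$ trivially satisfies $(S_2)$. Projective dimension localizes, and as in the proof of Lemma~\ref{op} one has $(\mathfrak{C}_R)_\fp = \mathfrak{C}_{R_\fp}$; hence $\pd_{R_\fp}(\mathfrak{C}_{R_\fp}) < \infty$. Applying Lemma~\ref{op} to $R_\fp$ yields that $R_\fp$ is normal, so $R$ satisfies $(R_1)$.

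Since $R$ is $(R_1)$ but not normal, every prime $\fp$ minimal over $\mathfrak{C}$ has $\Ht(\fp) \geq 2$. For such a $\fp$, the quotient $R_\fp/\mathfrak{C}_{R_\fp}$ is a nonzero module of finite length (since $\mathfrak{C}_{R_\fp}$ is $\fp R_\fp$-primary) with $\pd_{R_\fp}(R_\fp/\mathfrak{C}_{R_\fp}) < \infty$. The key step is to invoke Roberts' new intersection theorem with $M = R_\fp/\mathfrak{C}_{R_\fp}$ and $N = R_\fp$: this gives $\dim R_\fp \leq \pd_{R_\fp}(R_\fp/\mathfrak{C}_{R_\fp})$, and combined with Auslander--Buchsbaum and $\depth(R_\fp/\mathfrak{C}_{R_\fp}) = 0$ this forces $\depth R_\fp = \dim R_\fp$, i.e., $R_\fp$ is Cohen--Macaulay. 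In particular $R_\fp$ is $(S_2)$, so a second application of Lemma~\ref{op} to $R_\fp$ produces that $R_\fp$ is normal, contradicting $\fp \in \V(\mathfrak{C})$.

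The main (and essentially only non-formal) obstacle is the appeal to Roberts' theorem to upgrade finiteness of projective dimension of a finite-length quotient into Cohen--Macaulayness of the local ring; everything else reduces to the standard behavior of the conductor under localization together with the previous lemma.
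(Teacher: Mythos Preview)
Your argument is correct and uses the same essential idea as the paper: produce a nonzero finite-length module of finite projective dimension and invoke Roberts' intersection theorem to force Cohen--Macaulayness, then feed back into Lemma~\ref{op}. The only difference is organizational: the paper runs an induction on $\dim R$ so that the punctured spectrum is already normal and $R/\mathfrak{C}$ itself has finite length, whereas you localize directly at a minimal prime of $\mathfrak{C}$; your preliminary $(R_1)$ step is in fact unnecessary, since the Roberts/Auslander--Buchsbaum argument already handles height-one minimal primes of $\mathfrak{C}$ as well.
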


\begin{proof}We processed by induction on $d:=\dim R$. Recall that $R$ is analytically unramified. By definition, its completion with respect to $\fm$-adic topology is reduced, see \cite{HS}. Zero-dimensional reduced rings are regular. This proves the claim for $d=0$. Now, suppose $d>0$ and assume inductively that $R$ is locally normal over punctured spectrum. Recall that both assumptions $\pd_R(\mathfrak{C})<\infty$ and  analytically unramified behaved well with respect to localization. It turns out that $\mathfrak{C}_{R_\fp}=R_\fp$ for all $\fp\in\Spec(R)\setminus\{\fm\}$, i.e., $\frac{{R}}{\mathfrak{C}}$ is of finite length. 	Suppose on the way of contradiction that $\frac{{R}}{\mathfrak{C}}\neq 0$. Thanks to Lemma \ref{op}
	$R$ is not $(S_2)$.
Since $\frac{{R}}{\mathfrak{C}}$ is nonzero, of finite length and
of finite projective-dimension, and in the light of Roberts' intersection theorem \cite[6.2.4]{int1}, we see $R$ is Cohen-Macaulay. In particular, $R$ is $(S_2)$.  This contradiction shows $\frac{{R}}{\mathfrak{C}}= 0$, so 	$R$ is normal.
\end{proof}

By $\id_R(-)$ we mean the injective dimension  of $(-)$.
\begin{observation}\label{oi}
If $\id_R(\mathfrak{C})<\infty$, then
		$R$ is normal and in particular, $R$ is Gorenstein.	
\end{observation}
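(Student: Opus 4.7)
The plan is to verify $(S_2)$ and $(R_1)$ and then invoke \cref{sc}. Since $\mathfrak{C}$ is a nonzero finitely generated $R$-module of finite injective dimension, the Bass conjecture (a theorem of Peskine--Szpiro, Hochster, and Roberts) forces $R$ to be Cohen--Macaulay, hence $(S_2)$. The identity $(\mathfrak{C}_R)_\fp=\mathfrak{C}_{R_\fp}$ from \cref{op} and the compatibility of the analytically unramified hypothesis with localization reduce $(R_1)$ to the statement: a one-dimensional analytically unramified local ring $R$ with $\id_R(\mathfrak{C})<\infty$ must be a discrete valuation ring.

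Under this dimension-one reduction, $\mathfrak{C}$ contains a nonzerodivisor and is therefore maximal Cohen--Macaulay. The normalization $\overline{R}$ is a finite product of DVRs, so every ideal of $\overline{R}$ is principal; viewing $\mathfrak{C}$ as an ideal of $\overline{R}$ (using that $\mathfrak{C}\cdot\overline{R}=\mathfrak{C}$), we conclude $\mathfrak{C}\cong\overline{R}$ as $\overline{R}$-modules and a fortiori as $R$-modules. On the other hand, after passing to the completion so that $R$ admits a canonical module, Sharp's classification of MCM modules of finite injective dimension gives $\mathfrak{C}\cong\omega_R^n$, and matching generic rank forces $\mathfrak{C}\cong\omega_R$. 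Combining, $\omega_R\cong\overline{R}$ as $R$-modules.

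Counting minimal generators then yields $\operatorname{type}(R)=\mu_R(\omega_R)=\mu_R(\overline{R})=e(R)$, where the final equality is the standard identification $\mu_R(\overline{R})=\lambda(\overline{R}/\fm\overline{R})=e(R)$ (via the fact that $\fm\overline{R}=x\overline{R}$ for any superficial element $x$, together with the multiplicity formula $\lambda(\overline{R}/x\overline{R})=e(R)$). However, Matlis' inequality gives $\operatorname{type}(R)\leq e(R)-1$ for any one-dimensional Cohen--Macaulay local ring that is not a DVR, contradicting $\operatorname{type}(R)=e(R)$; hence $R$ is a DVR. The ``in particular $R$ is Gorenstein'' clause follows because $R$ normal implies $\mathfrak{C}=R$, whence $\id_R(R)=\id_R(\mathfrak{C})<\infty$. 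The main obstacle is the dimension-one step, which relies on the coordinated use of Sharp's classification of MCM modules of finite injective dimension, the principal-ideal structure on $\overline{R}$, and Matlis' type-multiplicity bound; arranging these three ingredients cleanly is the nontrivial part of the argument.
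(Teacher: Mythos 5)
Your proposal is correct, but the route you take through the height-one case is genuinely different from the paper's. Both arguments open identically: Bass' conjecture gives Cohen--Macaulayness, hence $(S_2)$, and localization at a height-one prime (using $(\mathfrak{C}_R)_\fp=\mathfrak{C}_{R_\fp}$) reduces everything to the one-dimensional situation. At that point the paper observes that $\mathfrak{C}_{R_\fp}$ is integrally closed, $\fm$-primary and full, and then quotes Goto--Hayasaka \cite[Theorem 2.2]{goto} as a black box to conclude $R_\fp$ is regular. You instead combine three classical structure results: Sharp's classification of maximal Cohen--Macaulay modules of finite injective dimension (giving $\mathfrak{C}\cong\omega_R$ after completion and a rank count), the principality of ideals in the semilocal Dedekind ring $\overline{R}$ together with $\mathfrak{C}\overline{R}=\mathfrak{C}$ (giving $\mathfrak{C}\cong\overline{R}$), and the inequality $\operatorname{type}(R)\le e(R)-1$ for a non-regular one-dimensional Cohen--Macaulay ring, played against $\mu_R(\overline{R})=e(R)$. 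This is more self-contained and yields the extra information that a non-normal height-one localization would have $\omega\cong\overline{R}$, at the cost of a longer chain of identifications and some care at the seams: you must pass to the completion for Sharp's theorem (legitimate here because analytic unramifiedness localizes by Rees and forces $\mathfrak{C}$ to commute with completion), and $\overline{R_\fp}$ is a finite product of semilocal PIDs rather than literally of DVRs, though that does not affect the principality argument. The final deduction that normality forces $\mathfrak{C}=R$ and hence Gorensteinness is the same in both proofs.
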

\begin{proof} By Bass' conjecture, which is a theorem,  $R$ is Cohen-Macaulay (see \cite{PS} and  \cite{int1}). In particular, $R$ is $(S_2)$.
Let $\fp\in\Spec(R)$ be of height one. Recall
that $(\mathfrak{C}_R)_\fp=\mathfrak{C}_{R_\fp}$ and that 
	$\mathfrak{C}_{R_\fp}$ is integrally closed, as $\dim({R_\fp})=1$ and 	$\mathfrak{C}_{R_\fp}$ is a common ideal of ${R_\fp}$ and $\overline{R_\fp}$ (see \cite[Ex. 12.2]{HS}). Due to Serre's criteria
	we may assume ${R_\fp}\neq\overline{R_\fp}$. 
	This yields that $\mathfrak{C}_{R_\fp}$ is primary to the maximal ideal, and integrally closed. This in turn implies that $\mathfrak{C}_{R_\fp}$ is full. Also 
	 $\id_{R_\fp}(\mathfrak{C}_\fp)<\infty$. In view of \cite[Theorem 2.2]{goto} we observe that
	${R_\fp}$ is regular.
	Again, by Serre's criteria, $R$ is normal. Then $\mathfrak{C}=R$ has finite injective dimension. So, $R$ is Gorenstein.
\end{proof}

Let $I$ be an ideal generated by $f_{1},\ldots,f_{r}$ of $R$. For an $R$-module $M$, we have the \v{C}ech complex
\begin{equation}
{C}^{\bullet}(I;M) : 0 \lo M \lo \prod_{1 \leq j \leq r}M_{f_{j}} \lo \prod_{1 \leq j_{1} < j_{2} \leq r} M_{f_{j_{1}}f_{j_{2}}} \lo \cdots \lo M_{f_{1}\cdots f_{r}} \lo 0.
\end{equation}
Then its $i$-th cohomology is  denoted by $H^{i}_{I}(M)$.
\begin{observation}\label{ec}Suppose $\Ext_R^i(\mathfrak{C},\mathfrak{C})=0$ for some $i>0$ where $R$ is one of the following:
	\begin{enumerate}
		\item[i)]$R$ is $(S_2)$
	\item[ii)]$R$ is quasi-Buchsbaum\footnote{here, and elsewhere we only need $\fm H^1_{\fm}(R)=0$.}, or 	\item[iii)]$R$ is seminormal.	\end{enumerate}
Then
		$\mathfrak{C}$ is free and in particular	$R$ is normal. 
\end{observation}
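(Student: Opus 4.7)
The endgame is identical to that of Lemma~\ref{op}: once $\mathfrak{C}$ is known to be free, it is an ideal generated by a non-zero-divisor, hence principal, and since $\mathfrak{C}$ is not contained in any proper principal ideal by \cite[Ex.~12.4]{HS}, this forces $\mathfrak{C}=R$, which yields normality of $R$. The entire content is therefore in proving freeness of $\mathfrak{C}$.

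I would treat case~(i) first by localizing at a prime $\fp$. As already recorded in Lemma~\ref{op}, $(\mathfrak{C}_R)_\fp=\mathfrak{C}_{R_\fp}$, and the vanishing $\Ext^i_R(\mathfrak{C},\mathfrak{C})=0$ persists after localization since $\mathfrak{C}$ is finitely presented. Under $(S_2)$, the ring $R_\fp$ is one-dimensional and Cohen-Macaulay at every height-one prime, and by Fact~\ref{sc} the normality of $R$ amounts to $(R_1)$, i.e.\ to freeness of $\mathfrak{C}_{R_\fp}$ at each such $\fp$. There either $\mathfrak{C}_{R_\fp}=R_\fp$ (and we are done) or $\mathfrak{C}_{R_\fp}$ is $\fm_\fp$-primary and hence a maximal Cohen-Macaulay fractional ideal whose $R_\fp$-endomorphism ring strictly contains $R_\fp$, because $\overline{R_\fp}\subseteq\End_{R_\fp}(\mathfrak{C}_{R_\fp})$.

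The decisive input is then a one-dimensional Auslander-Reiten type rigidity: over a one-dimensional CM local ring, an MCM fractional ideal $I$ whose endomorphism ring strictly contains $R$ satisfies $\Ext^i_R(I,I)\neq 0$ for every $i>0$. I would derive this by combining the identity
\[
\Ext^i_R(\mathfrak{C},\mathfrak{C})=\Ext^1_R(\Omega^{i-1}\mathfrak{C},\mathfrak{C}),
\]
valid because syzygies of MCM modules stay MCM in dimension one, with the short exact sequence $0\to\mathfrak{C}\to\overline{R}\to\overline{R}/\mathfrak{C}\to 0$ of $\overline{R}$-modules in order to exhibit an obstructing Ext class. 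This one-dimensional step, which is essentially the Auslander-Reiten conjecture specialized to conductor ideals, is what I expect to be the main obstacle.

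For (ii), the footnote allows the weakening $\fm H^1_\fm(R)=0$, which bounds the non-Cohen-Macaulay defect of $R$; this lets me adapt the Roberts-intersection argument of Observation~\ref{2p} to force $R/\mathfrak{C}$ to be of finite length, after which the height-one localization step returns us to the dimension-one argument above. For (iii), seminormality forbids the ``glueing'' of $\overline{R}/R$ at the non-normal locus and ensures that $R$ is $(S_2)$ at every height-one prime containing $\mathfrak{C}$, once again reducing to the same one-dimensional endpoint, where the Ext-vanishing hypothesis is brought to contradiction exactly as in~(i).
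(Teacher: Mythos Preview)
Your outline for part~(i) correctly identifies the reduction to height-one primes and the fact that $\mathfrak{C}_A$ is an $\fm_A$-primary integrally closed ideal over a one-dimensional Cohen--Macaulay local ring $A=R_\fp$. But the step you flag as ``the main obstacle'' is precisely the content of the proof, and you have not supplied it: the identity $\Ext^i_A(\mathfrak{C}_A,\mathfrak{C}_A)\cong\Ext^1_A(\Omega^{i-1}\mathfrak{C}_A,\mathfrak{C}_A)$ together with the sequence $0\to\mathfrak{C}_A\to\overline{A}\to\overline{A}/\mathfrak{C}_A\to 0$ does not by itself produce a nonzero class in $\Ext^1$. The paper's argument runs through different machinery: the Auslander--Bridger exact sequence
\[
\Ext^i_A(\mathfrak{C}_A,\mathfrak{C}_A)\twoheadrightarrow\Tor_1^A\bigl(\mathfrak{C}_A,\Tr(\Syz_i\mathfrak{C}_A)\bigr)
\]
forces the right-hand Tor to vanish, hence $\Tor_2^A(A/\mathfrak{C}_A,\Tr(\Syz_i\mathfrak{C}_A))=0$. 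Now \cite[3.3]{corso} is invoked: an integrally closed $\fm_A$-primary ideal is Tor-rigid in the sense that this vanishing forces $\pd_A\Tr(\Syz_i\mathfrak{C}_A)<2$, whence $(\Syz_i\mathfrak{C}_A)^\ast$ is free, and then \cite[Lemma~3.9]{dao} yields freeness of $\Syz_i\mathfrak{C}_A$ itself. None of this is visible in your sketch.

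Your treatment of (ii) and (iii) is on the wrong track. You propose to localize at height-one primes and feed back into the argument for~(i), but neither hypothesis gives $(S_2)$, so $R_\fp$ at a height-one prime need not be one-dimensional Cohen--Macaulay, and the ``dimension-one endpoint'' is not available. The paper does something entirely different: an induction on $\dim R$ reduces to the case where $\mathfrak{C}$ is $\fm$-primary; then the hypothesis is used only to force $\mathfrak{C}=\fm$. In~(ii) this comes from $H^1_\fm(R)\cong\overline{R}/R$ (finite length) and $\fm H^1_\fm(R)=0$, so $\mathfrak{C}=\Ann(\oplus k)=\fm$; in~(iii) seminormality makes $\mathfrak{C}$ radical in $\overline{R}$, hence $\mathfrak{C}=\fm_{\overline{R}}\cap R=\fm$. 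Once $\mathfrak{C}=\fm$, the vanishing $\Ext^i_R(\fm,\fm)=0$ gives $\mu^{i+1}(\fm,\fm)=0$, and Roberts' theorem on the absence of gaps in Bass numbers \cite{Ro2} forces $\id_R(\fm)<\infty$, i.e.\ $R$ regular --- a contradiction. There is no Roberts-intersection argument and no return to height-one primes.
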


\begin{proof}i) Suppose first that  $R$ is $(S_2)$ Let $\fp$ be in $\Spec(R)$ of height one. Let $A:={R_\fp}$. Recall
that $(\mathfrak{C}_R)_\fp=\mathfrak{C}_A$. 
We look at the following exact sequence taken from \cite{ab}$$
\Tor_2^A(\Tr(\Syz_i(\mathfrak{C}_A)),\mathfrak{C}_A)\rightarrow
\Ext^i_A(\mathfrak{C}_A,A)\otimes_A \mathfrak{C}_A\rightarrow\Ext^i_A(\mathfrak{C}_A,\mathfrak{C}_A)\rightarrow\Tor_1^A(\mathfrak{C}_A,\Tr(\Syz_i(\mathfrak{C}_A)))\rightarrow 0,$$
where
$\Syz_i(-)$ is the $i^{th}$-syzygy of a module  $(-)$ and
 $\Tr(\Syz_i(\mathfrak{C}_A))$
 stands for the Auslander's transpose.
  In order to define it, we first set $(-)^\ast:=\Hom_A(-,A)$ and take $A^n\to A^m\to \Syz_i(\mathfrak{C}_A) \to 0$ be 
 a finite type presentation. Now, by taking dual, $\Tr$ is defined by the following cokernel:
 $$ 0\lo \Syz_i(\mathfrak{C}_A)^\ast\lo (A^\ast)^m\lo (A^\ast)^n\lo\Tr(\Syz_i(\mathfrak{C}_A)) \lo 0.$$From this  $\Tor_1^A(\mathfrak{C}_A,\Tr(\Syz_i(\mathfrak{C}_A)))=0$. By shifting $$\Tor_2^A(A/\mathfrak{C}_A,\Tr(\Syz_i(\mathfrak{C}_A)))=0\quad(\ast)$$ Without loss
of generality $\mathfrak{C}_A$ is integrally closed and primary to maximal ideal of $A$. This along with $(\ast)$ implies that $\pd(\Tr(\Syz_i(\mathfrak{C}_A)))<2$ (see \cite[3.3]{corso}).
Since  $(\Syz_i(\mathfrak{C}_A))^\ast$ is the second syzygy of  $\Tr(\Syz_i(\mathfrak{C}_A))$ we deduce that
 $  \Syz_i(\mathfrak{C}_A)  ^\ast $ is free. Thanks to \cite[Lemma 3.9]{dao} we know that $\Syz_i(\mathfrak{C}_A)$ is free. Then $\pd(\mathfrak{C}_A)<\infty$. In view of Observation \ref{op} we deduce that $A$ is normal and via the dimension consideration, $A$ becomes regular. We proved that $R$ satisfies $(R_1)$. According to	Fact \ref{sc}, $R$ is normal. In particular, $\mathfrak{C}=R$ is free.
 
ii) Suppose now that $R$ is  quasi-Buchsbaum. 
 We processed by induction on $d:=\dim R$. Recall that $R$ is analytically unramified.  This proves the claim for $d=0$. Now, suppose $d>0$ and assume inductively that $R$ is locally normal over punctured spectrum.  It turns out that  $\frac{{R}}{\mathfrak{C}}$ (resp. $\frac{\overline{R}}{R}$) is of finite length. 	Suppose on the way of contradiction that $\frac{{R}}{\mathfrak{C}}\neq 0$. Thanks to the first case, and without loss of generality, we assume in addition that
 $R$ is not $(S_2)$. In fact, following part i) we may and do assume in addition that $d>1$, as 1-dimensional reduced rings are $(S_2)$. This allows us to deal with the annihilator of 
$H^1_\fm(R)$ as follows. Namely, from  $0\to R\to  \overline{R}\to \frac{\overline{R}}{R} \to 0$ we deduce $0=H^0_\fm(\overline{R})\to H^0_\fm( \frac{\overline{R}}{R})\to H^1_\fm(R)\to H^1_\fm(\overline{R})=0.$ From this $H^0_\fm( \frac{\overline{R}}{R})=\frac{\overline{R}}{R}$.
 Consequently, $H^1_\fm(R)=H^0_\fm( \frac{\overline{R}}{R})=\frac{\overline{R}}{R}$, so $\mathfrak{C}_R=\Ann(H^1_\fm(R))=\Ann(\oplus_X k)$ where $X$ is an index set. Since
 $\mathfrak{C}_R\neq R$, $X$ is non-empty. It follows that
 $\mathfrak{C}=\fm$.
 Since $\Ext^i_R(\fm,\fm)=0$ we deduce $\Ext^{i+1}_R(R/ \fm,\fm)=0$. In other words, $\mu^{i+1}(\fm)=0$. We know from  \cite[Theorem II]{Ro2} that $i+1\not\in[\depth{\fm}=1,\id_R(\fm)]$. Thus $\id_R(\fm)<\infty$, i.e., $R$ is regular a contradiction.
Thus,	$\frac{{R}}{\mathfrak{C}}=0$, and the claim follows.

iii) We processed by induction on $d:=\dim R$. Recall that $R$ is analytically unramified. Since zero-dimensional reduced rings are regular, we proved the claim for $d=0$. Now, suppose $d>0$ and assume inductively that $R$ is locally normal over punctured spectrum. Recall that all three assumptions, seminormality,  $\Ext_R^i(\mathfrak{C},\mathfrak{C})=0$ and  analytically unramified behaved well with respect to localization. It turns out that $\mathfrak{C}_{R_\fp}=R_\fp$ for all $\fp\in\Spec(R)\setminus\{\fm\}$, i.e., $\frac{{R}}{\mathfrak{C}}$ is of finite length.
	Suppose on the way of contradiction that $\frac{{R}}{\mathfrak{C}}\neq 0$.
	Then $\mathfrak{C}$ is primary to a maximal ideal of normalization. In particular, $R$ is not regular.
	By seminormality, we know $\mathfrak{C}$ is a radical ideal in $\overline{R}$. Thus,  $\mathfrak{C}=\fm_{\overline{R}}$. Then $\mathfrak{C}=\mathfrak{C}\cap R=\fm_{\overline{R}}\cap R=\fm$, see \cite[page 62]{mat}. Since $\Ext^i_R(\fm,\fm)=0$ we deduce a contradiction.
Thus,	$\frac{{R}}{\mathfrak{C}}=0$, and the claim follows.
\end{proof}

\begin{definition}
	\begin{enumerate}
		\item[i)]
		By  $(G_n)$ we mean  that $R_{\fp}$  is Gorenstein for any $\fp$ of height at most  $n$.	
		\item[ii)] Recall from \cite{wol} that a ring $R$ is called quasi-normal  if it is  $(S_2)$ and $(G_1)$.
	\end{enumerate}
\end{definition}

By $\Gdim_R(-)$ we mean the G-dimension of $(-)$.

\begin{observation}\label{g}Suppose $R$ is $(S_2)$ and $\Gdim_R(\mathfrak{C})<\infty$.
	Then
	$R$ is quasi-normal.
\end{observation}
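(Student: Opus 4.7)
The plan is to follow the template of Lemma \ref{op}, with projective dimension replaced by G-dimension and Auslander--Buchsbaum replaced by the Auslander--Bridger formula. Since $(S_2)$ is already part of the hypothesis, the task reduces to verifying $(G_1)$: that $R_\fp$ is Gorenstein for each $\fp \in \Spec(R)$ of height at most one.

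For $\fp$ of height zero, analytic unramifiedness makes $R_\fp$ a field, which is Gorenstein. For $\fp$ of height one, I set $A := R_\fp$. The $(S_2)$ assumption gives $\depth(A) = 1 = \dim A$, so $A$ is a one-dimensional analytically unramified Cohen--Macaulay local ring. Since finiteness of G-dimension localizes for finitely generated modules, and $(\mathfrak{C}_R)_\fp = \mathfrak{C}_A$ as in Lemma \ref{op}, we have $\Gdim_A(\mathfrak{C}_A) < \infty$.

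If $\mathfrak{C}_A = A$, then $A$ is normal, hence (being one-dimensional) regular, hence Gorenstein. Otherwise $\mathfrak{C}_A$ is a proper $\fm$-primary integrally closed ideal containing an $A$-regular element (see \cite[Ex.~12.2]{HS}), so $\depth(\mathfrak{C}_A) = 1$, and the Auslander--Bridger formula yields
$$
\Gdim_A(\mathfrak{C}_A) = \depth(A) - \depth(\mathfrak{C}_A) = 0.
$$
Consequently $\mathfrak{C}_A$ is totally reflexive and $\Ext^i_A(\mathfrak{C}_A, A) = 0$ for all $i > 0$.

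The main obstacle is converting the total reflexivity of $\mathfrak{C}_A$ into Gorensteinness of $A$. The short exact sequence $0 \to \mathfrak{C}_A \to A \to A/\mathfrak{C}_A \to 0$ translates the $\Ext$-vanishing into $\Ext^j_A(A/\mathfrak{C}_A, A) = 0$ for every $j \geq 2$. Since $A/\mathfrak{C}_A$ is a nonzero module of finite length, the plan is to transfer this vanishing to the residue field $k$ via a composition-series analysis; this yields $\Ext^j_A(k, A) = 0$ for $j \geq 2$, which combined with $A$ being Cohen--Macaulay of dimension one forces $\id_A(A) < \infty$, so $A$ is Gorenstein. An equivalent route, which might be cleaner, is to invoke a Goto-type theorem asserting that a Cohen--Macaulay local ring admitting an $\fm$-primary integrally closed ideal of G-dimension zero must be Gorenstein (the fact that $\mathfrak{C}_A$ is faithful, containing a non-zero-divisor, rules out degeneracies). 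This transfer is the delicate step; the remaining ingredients are parallel to the proof of Lemma \ref{op}.
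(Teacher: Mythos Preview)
Your overall reduction to height-one primes is fine and matches the paper, which says the argument is the obvious modification of Observation~\ref{oi} (not Lemma~\ref{op}): localize at $\fp$ of height one, observe $\mathfrak{C}_A$ is $\fm_A$-primary and integrally closed, note $\Gdim_A(\mathfrak{C}_A)<\infty$, and then invoke a Goto--Hayasaka type statement to conclude $A$ is Gorenstein. Your ``equivalent route'' is therefore exactly the paper's route, and that part of your proposal is correct.

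The gap is in your primary route, the composition-series transfer. From $\Ext^j_A(A/\mathfrak{C}_A,A)=0$ for all $j\ge 2$ you cannot deduce $\Ext^j_A(k,A)=0$ for $j\ge 2$ by filtering $A/\mathfrak{C}_A$: given $0\to N\to M\to k\to 0$, vanishing of $\Ext^j_A(M,A)$ only gives an injection $\Ext^j_A(k,A)\hookrightarrow \Ext^{j+1}_A(N,A)$, and you have no control over the latter. In fact the implication is false in general: over a one-dimensional Cohen--Macaulay non-Gorenstein local ring $A$ that admits a nonfree totally reflexive module $G$ (such rings exist), the module $G/xG$ for a parameter $x$ is nonzero of finite length with $\Gdim_A(G/xG)=1$, hence $\Ext^j_A(G/xG,A)=0$ for $j\ge 2$, while $\Ext^j_A(k,A)\ne 0$ for infinitely many $j$. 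So integral closedness of $\mathfrak{C}_A$ is not optional; it is precisely what makes the Goto-type step go through, and you should commit to that route rather than the filtration argument.
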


\begin{proof}
	This is similar to Observation \ref{oi}, and we leave the straightforward modification to the reader.
\end{proof}

We say a module $M$ is reflexive if the natural map $\delta_M:M\to M^{\ast\ast}$ is an isomorphism. One may ask the reflexivity problem of the $R$-module $\overline{R}$. 
\begin{fact}\label{ref}
i) Suppose $\dim R=1$ then $\overline{R}$ is reflexive.

ii) Suppose  $R$ is $(S_2)$. Then  $\overline{R}$ is reflexive.

iii) Let $R$ be 2-dimensional Buchsbaum ring which is $(R_1)$. If $\overline{R}$ is reflexive, then $\mu^1(\fm ,R)\leq1$.

iv) Let $R$ be 2-dimensional Buchsbaum ring which is $(R_1)$ and $\mu^1(\fm ,R)\leq1$. Then $\overline{R}$ is reflexive. 
\end{fact}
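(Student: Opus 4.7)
The plan is to treat the four parts via a common framework: interpret both $\overline{R}$ and its double dual as submodules of $Q(R)$ via the identification $\overline{R}^{\ast\ast}=\Hom_R(\mathfrak{C},R)=\{q\in Q(R):q\mathfrak{C}\subseteq R\}$, and observe that the canonical map $\delta_{\overline{R}}$ corresponds to the inclusion $\overline{R}\hookrightarrow\Hom_R(\mathfrak{C},R)$, which is automatic from $\mathfrak{C}\overline{R}\subseteq R$. Reflexivity then reduces to checking equality in this inclusion, which I will handle with length counts.

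For (i) and (ii) the arguments should be direct. In (i), $R$ is $1$-dimensional and Cohen--Macaulay, so $\overline{R}$ is maximal Cohen--Macaulay of depth $1$; applying $\Hom_R(-,R)$ to $0\to\mathfrak{C}\to R\to R/\mathfrak{C}\to 0$ identifies $\Hom_R(\mathfrak{C},R)/R\cong\Ext^1_R(R/\mathfrak{C},R)$, and matching its length against that of $\overline{R}/R$ via the standard $1$-dimensional conductor-length formula forces equality. For (ii) I would invoke the Evans--Griffith/Vasconcelos criterion that over an $(S_2)$ Noetherian ring a torsion-free finitely generated module is reflexive iff it satisfies $(S_2)$; the module $\overline{R}$ is torsion-free since $\overline{R}\subseteq Q(R)$, and it inherits $(S_2)$ because $\overline{R}$ is normal as a ring and depth is preserved under the module-finite extension $R\to\overline{R}$.

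For (iv) I would split on the value of $\mu^1(\fm,R)\in\{0,1\}$. The case $\mu^1=0$ forces $\depth R=2$, hence $R$ is $(S_2)$, and with $(R_1)$ Serre's criterion (Fact \ref{sc}) gives $R$ normal, so $\overline{R}=R$ is trivially reflexive. If $\mu^1=1$ and $R$ is not normal, then (by the argument in Observation \ref{ec}.ii, using that $H^1_\fm(R)$ is $\fm$-annihilated of finite length) we get $\mathfrak{C}=\fm$, and the exact sequence $0\to R\to\Hom_R(\fm,R)\to\Ext^1_R(k,R)\to 0$ gives $\Hom_R(\fm,R)/R\cong k$. The canonical injection $\overline{R}/R\hookrightarrow\Hom_R(\fm,R)/R$ between non-zero finite length modules then forces equality, producing $\overline{R}=\Hom_R(\mathfrak{C},R)$.

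For (iii) I would argue the converse. Assume $\overline{R}$ is reflexive; if $R$ is normal there is nothing to prove. Otherwise $\mathfrak{C}=\fm$, reflexivity gives $\overline{R}=\Hom_R(\fm,R)$, and hence $\overline{R}/R\cong\Ext^1_R(k,R)=k^{\mu^1}$; applying $\Ext^\bullet_R(k,-)$ to $0\to R\to\overline{R}\to\overline{R}/R\to 0$ and using $\depth_R\overline{R}=2$ also yields $\mu^1=\ell(\overline{R}/R)$. The remaining step, which I expect to be the main obstacle, is to show $\mu^1\leq 1$ in this setting: here the full strength of the Buchsbaum hypothesis (not merely quasi-Buchsbaum) must be used, presumably through the identification $\overline{R}/R\cong H^1_\fm(R)$ together with the Buchsbaum surjectivity $\Ext^i_R(k,R)\twoheadrightarrow H^i_\fm(R)$ and the socle structure of $R/\underline{x}R$ for a parameter sequence $\underline{x}$. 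I anticipate this is the delicate step, since the analogous statement already fails for the weaker quasi-Buchsbaum hypothesis: the ring $R=k+\fm_A^2\subseteq A=k[[x,y]]$ is a quasi-Buchsbaum $(R_1)$ example with $\overline{R}=A$ reflexive but $\mu^1(\fm,R)=2$, and verifying that this $R$ is \emph{not} genuinely Buchsbaum (by exhibiting a parameter pair violating $\fm((x_1):x_2)\subseteq(x_1)$) is what makes the hypothesis in (iii) meaningful.
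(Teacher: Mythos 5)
Your common framework is the right one, but you stop short of the step that makes it work, and the substitutes you propose for that step do not hold up. Once you identify $\overline{R}^{\ast\ast}$ with $\Hom_R(\mathfrak{C},R)=(R:_{Q(R)}\mathfrak{C})$ and $\delta_{\overline{R}}$ with the inclusion $\overline{R}\subseteq(R:\mathfrak{C})$, the equality $(R:\mathfrak{C})=\overline{R}$ follows in every dimension from the determinant trick: if $q\mathfrak{C}\subseteq R$ then $q\mathfrak{C}=q(\mathfrak{C}\overline{R})\subseteq(R:\overline{R})=\mathfrak{C}$, so $q$ stabilizes the faithful finitely generated module $\mathfrak{C}$ and is integral over $R$. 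This disposes of (i), (ii) and (iv) in one stroke (it is the content of \cite[Ex.~12.13]{HS}, which the paper simply cites in part (i)). By contrast, your length count in (i) is circular: the equality $\operatorname{length}(\Ext_R^1(R/\mathfrak{C},R))=\operatorname{length}(\overline{R}/R)$ is exactly what has to be proved, and the ``standard conductor-length formula'' $\operatorname{length}(\overline{R}/R)=\operatorname{length}(R/\mathfrak{C})$ is the Gorenstein criterion, not a general fact. Likewise the criterion you invoke in (ii) is false as stated: over the $(S_2)$ ring $k[[t^3,t^4,t^5]]$ the ideal $(t^3,t^4)$ is torsion-free and maximal Cohen--Macaulay, yet its bidual is $(t^3,t^4,t^5)$; the correct statement \cite[1.4.1(b)]{BH} requires reflexivity at the primes of depth at most one as an \emph{input}, which is where part (i) enters the paper's proof of (ii).

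The serious gap is (iii), where you concede you have no argument; in fact none exists, and your own example settles the matter in the opposite direction from the one you expect. The ring $R=k+\fm_A^2\subseteq A=k[[x,y]]$ is genuinely Buchsbaum, not merely quasi-Buchsbaum: any system of parameters $x_1,x_2$ of $R$ is a regular sequence on $A$, so $((x_1R):_Rx_2)\subseteq x_1A$, and $\fm_R\cdot x_1A=x_1\fm_A^2\subseteq x_1R$, whence $\fm_R\bigl((x_1R):_Rx_2\bigr)\subseteq x_1R$ and every system of parameters is a weak sequence. Since $\overline{R}=A$ is reflexive (by the argument above, or by your direct computation) while $\mu^1(\fm,R)=\dim_k\Ext^1_R(k,R)=\dim_k(A/R)=2$, part (iii) is false as stated. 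This is consistent with the general picture: applying $\Hom_R(k,-)$ to $0\to R\to\overline{R}\to k^n\to0$ and using $\depth_R\overline{R}=2$ gives $\mu^1(\fm,R)=n$ unconditionally, while dualizing the same sequence gives $\coker(\overline{R}^{\ast}\hookrightarrow R^{\ast})=R/\mathfrak{C}=k$ and hence $\overline{R}^{\ast\ast}/R\cong\Ext^1_R(k,R)=k^{n}$; the paper's own proof of (iii) goes wrong precisely by omitting the term $\Ext^1_R(\overline{R},R)$ from the dualized sequence and thereby inflating this cokernel to $k^{nt}$. Your part (iv) is correct and cleaner than the paper's diagram chase, but as the above shows its hypothesis $\mu^1(\fm,R)\le1$ is not actually needed.
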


\begin{proof}
i) Following \cite[Ex. 12.13]{HS} we know $\mathfrak{C}^\ast=\overline{R}$. But, $\mathfrak{C}=\Hom_R(\overline{R},R)$. Combining these $\overline{R}^{\ast\ast}=\mathfrak{C}^\ast=\overline{R},$ it follows the natural map
$\overline{R}\to \overline{R}^{\ast\ast}$ is an isomorphism, and so $\overline{R}$ is reflexive.\footnote{Alternatively, use the fact that any dual module
	over Cohen-Macaulay rings is reflexive.}

ii) Let $\fp\in \Spec(R)$ be so that  $\depth R_{\fp} \geq 2$.
Recall that the map $R_{\fp}\lo \overline{R}_{\fp}$ is finite, and in view of \cite[1.2.26(b)]{BH}
$\depth_{{R}_{\fp}}(\overline{R}_{\fp})=\depth_{\overline{R}_{\fp}}(\overline{R}_{\fp})\geq 2$. Recall from i) that the claim is clear in dimension one, so:
\begin{enumerate}[(a)]
	\item $\overline{R}_{\fp}$ is reflexive for all $\fp$ with  $\depth R_{\fp} \leq 1$, and
	\item $\depth (\overline{R})_{\fp} \geq 2$ for all $\fp$ with  $\depth R_{\fp} \geq 2$.
\end{enumerate}
From this, and in the light of \cite[Proposition 1.4.1(b)]{BH}, $\overline{R}$ is reflexive.

iii) Suppose first that $R$ is Cohen-Macaulay. Then $\depth(R)=2$ and  $\mu^1(\fm ,R)=0<1$. Now, suppose
$R$ is 2-dimensional Buchsbaum ring which is  $(R_1)$ and not Cohen-Macaulay.  
Recall that $H^1_\fm(R)=H^0_\fm( \frac{\overline{R}}{R})=\frac{\overline{R}}{R}$, so $\mathfrak{C}_R=\Ann(H^1_\fm(R))=\Ann(\oplus_n k)$ where $n$ is an integer. Since
$\mathfrak{C}_R\neq R$, $n>0$ and consequently, $\mathfrak{C}_R=\fm$.
In particular, there is an exact sequence $$0\lo R\lo \overline{R} \lo k^n\lo 0\quad(\ast)$$Recall that $\depth(R)=1$. This shows that $(\frac{{R}}{\fm})^\ast=0$ and  $\Ext_R^1(\frac{{R}}{\fm},R)=k^{t}$ where $t:=\mu^1(\fm ,R)\neq 0$.
By dual $(\ast)$, we have $$\begin{CD}
0@>>> \overline{R}^{\ast} @> >> {R}^{\ast} @>>> \Ext_R^1(k^n,R)=k^{nt}@>>>\Ext_R^1(R,R)=0\\
\end{CD}	$$
Another duality
yields that $$0\lo {R}^{\ast\ast}\lo \overline{R}^{\ast\ast} \lo  \Ext_R^1(k^{nt},R)=k^{nt^2}\lo \Ext_R^1(R^\ast,R)=0.$$
Recall by work of Eilenberg-Maclane that the  biduality is a functor. This means by applying it to $M\to N$ yields that  
$$\begin{CD}
 M^{\ast\ast}  @> >>  N^{\ast\ast}\\
\delta_{M}@AAA\delta_{N} @AAA    \\
M@>>>N.\\
\end{CD}	$$
There is a map $g$ completes the following depicted diagram:
$$\begin{CD}
0@>>> R^{\ast\ast} @> >> \overline{R}^{\ast\ast} @>>>k^{nt^2} @>>> 0\\
@.\delta_{{R}}@AAA \delta_{\overline{R}}@AAA \exists g @AAA    \\
0@>>>R @>>> \overline{R} @>>>k^{n}@>>>0.\\
\end{CD}	$$\iffalse	
Suppose on the way of contradiction that $\delta_{\overline{R}}:\overline{R}\to \overline{R}^{\ast\ast}$  is an isomorphism. 
Thanks to 5-lemma it turns out that  $\delta_{\overline{R}/R}:k\overline{R}\to k^{\ast\ast}$ is an isomorphism. But, $k^{\ast\ast}=0$ and we get to a contradiction. So,
  $\overline{R}$ is not reflexive, as claimed.\fi
 If $\delta_{\overline{R}}:\overline{R}\to \overline{R}^{\ast\ast}$  is an isomorphism,
 it shows $g$ is an isomorphism, and so $t=1$.
 
 iv) As the claim is clear when $R$ is Cohen-Macaulay, we may assume  $\mu^1(\fm ,R)=1$. Recall
 that
 there is the following diagram:$$\begin{CD}
 R^{\ast\ast} @> >> \overline{R}^{\ast\ast} @>>>k^n  @>>> 0@>>>0\\
  \delta_{{R}}@AAA \delta_{\overline{R}}@AAA g @AAA  h_4 @AAA h_5 @AAA  \\
R @>>> \overline{R} @>>>k^n @>>>0@>>>0.\\
 \end{CD}	$$
   Recall that  $h_2:= \delta_{\overline{R}}$ and $h_4$ are injective and $h_1:=\delta_{{R}}$ is surjective, then by a version of 5-lemma  $h_3:=g$ is injective. But, any injective endomorphism
   of a finite dimensional vector space is indeed an isomorphism. Thus, $g$ is an isomorphism. In view of $$\begin{CD}
   0@>>> R^{\ast\ast} @> >> \overline{R}^{\ast\ast} @>>>k^{n} @>>> 0\\
   @.\cong @AAA \delta_{\overline{R}}@AAA  g @AAA    \\
   0@>>>R @>>> \overline{R} @>>>k^{n}@>>>0\\
   \end{CD}	$$we deduce $\delta_{\overline{R}}$ is an isomorphism. So, the claim follows.
\end{proof}

\begin{corollary} The ring $R$ is quasi-normal, provided all of the following three assertions hold:
\begin{enumerate}
\item[i)]$\dim R=2$
\item[ii)] $\Gdim_R(\mathfrak{C})<\infty$, and 
\item[iii)]$\overline{R}$ is reflexive.
	\end{enumerate}
\end{corollary}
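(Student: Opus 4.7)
By Observation \ref{g}, once $R$ is known to satisfy $(S_2)$, conditions i) and ii) already deliver quasi-normality. Since $R$ is analytically unramified, hence reduced, $(S_2)$ in dimension two is equivalent to $\depth R\geq 2$ (the only non-trivial instance sits at $\fm$). So I plan to reduce the whole argument to deducing $\depth R\geq 2$ from conditions ii) and iii).

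The strategy is a contradiction argument. Suppose $\depth R=1$. Since $R$ is reduced, $R_{\fp}=\overline{R}_{\fp}$ is a field at every minimal prime $\fp$, so $\mathfrak{C}$ meets no minimal prime; in particular $\mathfrak{C}$ contains a non-zero-divisor, and hence $\depth_R(\mathfrak{C})\geq 1$. The Auslander-Bridger formula, applicable under hypothesis ii), then forces
\[
\Gdim_R(\mathfrak{C})=\depth R-\depth_R(\mathfrak{C})\leq 0,
\]
so $\Gdim_R(\mathfrak{C})=0$, i.e., $\mathfrak{C}$ is totally reflexive over $R$.

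The key observation is that total reflexivity is preserved under the duality $(-)^{\ast}:=\Hom_R(-,R)$, so $\mathfrak{C}^{\ast}$ is also totally reflexive. Using $\mathfrak{C}=\overline{R}^{\ast}$ from Fact \ref{ref}(i) and the reflexivity of $\overline{R}$ supplied by iii), one has $\mathfrak{C}^{\ast}=\overline{R}^{\ast\ast}=\overline{R}$. Therefore $\overline{R}$ is totally reflexive as an $R$-module, and a second application of Auslander-Bridger gives $\depth_R(\overline{R})=\depth R=1$. However, $\overline{R}$ is module-finite over $R$ and is normal of dimension two, so it is Cohen-Macaulay, whence $\depth_R(\overline{R})=\depth_{\overline{R}}(\overline{R})=2$, contradicting $\depth R=1$.

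The contradiction yields $\depth R\geq 2$, so $R$ is $(S_2)$, and a single invocation of Observation \ref{g} closes the argument. Nothing in the plan looks delicate beyond the Auslander-Bridger formula and the symmetry of total reflexivity under $(-)^{\ast}$, both of which are standard; the real content is that hypothesis iii) is precisely what permits the transfer of finite $\Gdim$-information from $\mathfrak{C}$ to $\overline{R}$, and it is at that transfer step that I would be most careful if anything turned out to require a subtler argument.
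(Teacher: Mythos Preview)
Your proof is correct and follows essentially the same route as the paper's: assume $\depth R=1$, use Auslander--Bridger to conclude $\mathfrak{C}$ is totally reflexive, dualize to get $\mathfrak{C}^{\ast}=\overline{R}^{\ast\ast}=\overline{R}$ totally reflexive via hypothesis iii), and then derive the contradiction $1=\depth_R(\overline{R})=2$ from normality of $\overline{R}$. One minor remark: your citation of Fact~\ref{ref}(i) for the identity $\mathfrak{C}=\overline{R}^{\ast}$ points to a one-dimensional statement; the equality $\mathfrak{C}=\Hom_R(\overline{R},R)$ is general and is recalled earlier in the paper (in the proof of Lemma~\ref{op}), so you may want to adjust the reference.
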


\begin{proof}
According to the argument of  Observation \ref{g}, $R$ satisfies $(G_1)$. Suppose on the way of contradiction  that the ring is not $(S_2)$. So, $\depth(R)=1$. Thanks
to Auslader-Bridger formula, $\mathfrak{C}$ is totally reflexive. It turns out that
its dual is also totally reflexive. But, $\mathfrak{C}^\ast=\overline{R}^{\ast\ast}\stackrel{iii)}=\overline{R}$. Again, duo
to Auslader-Bridger formula we have 
$$1=\depth_R(R)=\Gdim_R(\overline{R})+\depth_R(\overline{R})=
\Gdim_R(\overline{R})+\depth_{\overline{R}}(\overline{R})=2,$$ a contradiction.
Thus the ring satisfies $(S_2)$. By  Observation \ref{g}, $R$ is quasi-normal.
\end{proof}

Here, we recall that $M^{(n+1)\otimes }:=M^{n\otimes }\otimes_RM$. Vasconcelos \cite{2010} asked:

\begin{question}
	Let $R$ be a local domain and $M$ be  torsion-free. Is there an integer $e$ guaranteeing that if $M$ is not free,
	then the tensor power $M^{e\otimes }$ has nontrivial torsion?
\end{question}See \cite{finitsup} for some partial answers. Here, we prove it for conductor ideal:
\begin{observation}Suppose   $\mathfrak{C}^{i\otimes}$  is torsion-free for some $i>1$,
	 where $R$ is one of the following:
	\begin{enumerate}
		\item[i)]$R$ is $(S_2)$
		\item[ii)] $R$ is quasi-Buchsbaum, or 	\item[iii)]$R$ is seminormal.	\end{enumerate}
	Then
		$\mathfrak{C}$ is free and in particular	$R$ is normal. 
\end{observation}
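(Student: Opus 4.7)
My plan is to mirror the three-case scheme of Observation \ref{ec} verbatim, replacing the Ext-vanishing input by the torsion-freeness of a tensor power of $\mathfrak{C}$. In each case the reduction passes through the same geometric picture: localize (case (i)) or induct (cases (ii)--(iii)) to push the problem to either a one-dimensional localization where $\mathfrak{C}_{R_\fp}$ is $\fm_{R_\fp}$-primary and integrally closed, or to the equality $\mathfrak{C} = \fm$ in the original ring.

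For case (i) with $R$ satisfying $(S_2)$, I would localize at an arbitrary height-one prime $\fp$ and work with $A := R_\fp$. As in Lemma \ref{op}, $(\mathfrak{C}_R)_\fp = \mathfrak{C}_A$; torsion-freeness of $\mathfrak{C}^{i\otimes}$ also localizes. Without loss of generality $\mathfrak{C}_A$ is $\fm_A$-primary and integrally closed. The main engine here is the tensor-power rigidity package for one-dimensional local rings (Huneke-Wiegand style; see \cite{dao} and \cite{finitsup}), which forces an ideal whose $i$-fold tensor power is torsion-free to be principal. Coupled with \cite[Ex. 12.4]{HS} (the conductor is contained in no proper principal ideal), one concludes $\mathfrak{C}_A = A$, so $R$ is $(R_1)$; by Fact \ref{sc} and the $(S_2)$ hypothesis, $R$ is normal and $\mathfrak{C} = R$ is free.

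For cases (ii) and (iii), I would follow the inductive reduction of Observation \ref{ec} verbatim: induct on $d = \dim R$, with the $d=0$ base immediate from analytical unramifiedness, and in the inductive step exploit that all of ``tensor power torsion-free'', ``analytically unramified'' and ``quasi-Buchsbaum / seminormal'' localize to arrange that $R/\mathfrak{C}$ has finite length. In case (ii) the local cohomology computation $H^1_\fm(R) = \overline{R}/R$ gives $\mathfrak{C} = \Ann H^1_\fm(R) = \fm$; in case (iii) seminormality combined with the radicality of $\mathfrak{C}$ in $\overline{R}$ gives $\mathfrak{C} = \fm_{\overline{R}} \cap R = \fm$. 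Either way, I am reduced to the situation that $\fm^{i\otimes}$ is torsion-free for some $i>1$ while $R$ is not regular, and I must derive a contradiction.

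The main obstacle lives in this last reduction. Since $d > 0$ and $R$ is reduced, $\fm$ contains a non-zero-divisor. For $i = 2$ the argument is short: tensoring $0 \to \fm \to R \to k \to 0$ with $\fm$ embeds $\Tor_1^R(k,\fm)$ as an $\fm$-annihilated, hence torsion, submodule of $\fm \otimes \fm$, so torsion-freeness forces $\Tor_1^R(k,\fm) = 0$, whence $\fm$ is free and $R$ is regular. For general $i > 2$ I would need a rigidity step that propagates torsion-freeness of $\fm^{i\otimes}$ back to $\Tor_1^R(k,\fm) = 0$; this is the delicate point, and the spot where I would invoke either a version of Auslander's rigidity theorem applicable to the ambient setting or the partial answers to Vasconcelos' question collected in \cite{finitsup}, applied directly to the maximal ideal. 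Either route delivers the desired contradiction and completes the proof.
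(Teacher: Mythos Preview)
Your structural plan---mirror Observation~\ref{ec}, with cases (ii)/(iii) reducing to $\mathfrak{C}=\fm$ exactly as before---is correct and matches the paper. The paper even says explicitly that (ii) and (iii) follow by the ``straightforward modifications'' of Observation~\ref{ec}. The difference lies in the engine you use once the reduction is done, and here you over-complicate.

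\textbf{The ``delicate point'' is not delicate.} In cases (ii)/(iii), after reducing to $\mathfrak{C}=\fm$, you already have the right idea for $i=2$: tensor $0\to\fm\to R\to k\to 0$ with $\fm$ to see that $\Tor_1^R(k,\fm)$ embeds in $\fm^{2\otimes}$, is torsion, hence vanishes. But the \emph{same} computation for arbitrary $i>2$ (tensor with $\fm^{(i-1)\otimes}$ instead of $\fm$) gives
\[
\Tor_1^R(k,\fm^{(i-1)\otimes})\hookrightarrow \fm^{i\otimes},
\]
and this Tor is again $\fm$-annihilated, hence torsion, hence zero. But $\Tor_1^R(k,-)=0$ over a local ring means the module is \emph{free}, not merely torsion-free. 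So $\fm^{(i-1)\otimes}$ is free, in particular torsion-free, and a trivial induction on $i$ brings you back to $i=2$. No Auslander rigidity, no appeal to \cite{finitsup}, no hypothesis on the ambient ring beyond what you already have.

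\textbf{Case (i) uses the same trick, not Huneke--Wiegand.} The paper does not invoke a Huneke--Wiegand black box (which in its standard form requires a hypersurface, something a height-one localization of an $(S_2)$ ring need not be). Instead it runs exactly the Tor computation above over $R$, with $\mathfrak{C}$ in place of $\fm$: since $\mathfrak{C}$ contains a nonzerodivisor, $\Tor_1^R(R/\mathfrak{C},\mathfrak{C}^{(i-1)\otimes})$ is torsion and embeds in $\mathfrak{C}^{i\otimes}$, hence vanishes. Localizing at a height-one prime $\fp$ and setting $A=R_\fp$, one has $\Tor_1^A(A/\mathfrak{C}_A,\mathfrak{C}_A^{(i-1)\otimes})=0$ with $\mathfrak{C}_A$ integrally closed and $\fm_A$-primary; now \cite[3.3]{corso} (the same rigidity input used throughout the paper) forces $\mathfrak{C}_A^{(i-1)\otimes}$ to be free, and induction on $i$ finishes. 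Your reference to \cite{dao} is not the right tool here; that lemma was used in Observation~\ref{ec} to pass from freeness of a dual to freeness of the original, which is a different maneuver.

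In short: replace the vague ``rigidity package'' in all three cases by the single elementary move \emph{Tor trick $+$ induction on $i$}, with \cite[3.3]{corso} supplying freeness in case~(i) and $\Tor_1^R(k,-)=0\Rightarrow$ free supplying it in cases~(ii)/(iii). That is what the paper does.
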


\begin{proof}
i) We proceed  by induction on $i$. The case $i=2$ follows from the following argument, and avoid its repetition. So, we may assume $i>2$ and inductively we assume  that the desired claim holds for $i-1$.	Let $\fp$ be in $\Spec(R)$ of height one. Let $A:={R_\fp}$. Recall
	that $(\mathfrak{C}_R)_\fp=\mathfrak{C}_A$. We apply $\mathfrak{C}^{(i-1)\otimes}\otimes_R-$ to $0\to \mathfrak{C}\to R\to R/ \mathfrak{C} \to 0$ and deduce
	the exact sequence $$0=\Tor_1^R(R, \mathfrak{C}^{(i-1)\otimes})\to \Tor_1^R(\frac{R}{\mathfrak{C}},\mathfrak{C}^{(i-1)\otimes} )\to \mathfrak{C}^{i\otimes}.$$
	Since $\Tor_1^R(\frac{R}{\mathfrak{C}},\mathfrak{C}^{(i-1)\otimes} )$ is both torsion and torsion-free it should be zero. Then $\Tor_1^A(\frac{A}{\mathfrak{C}_A},\mathfrak{C}_A^{(i-1)\otimes} )=0$. First assume $\mathfrak{C}\nsubseteq \fp$. This leads to the equality $\mathfrak{C}_\fp=A$ which is free as an $A$-module.
	So, we may assume that $\mathfrak{C}\subseteq \fp$. By dimension consideration,	$\mathfrak{C}_{R_\fp}$  primary to the maximal ideal, and is integrally closed, as $\dim({A})=1$ and 	$\mathfrak{C}_{A}$ is a common ideal of ${A}$ and $\overline{A}$ (see \cite[Ex. 12.2]{HS}). In view of \cite[3.3]{corso}, we know
	$\mathfrak{C}_A^{(i-1)\otimes}$ is free. Thanks to an inductive argument, $\mathfrak{C}_A$ is free. In sum, in both cases,  $\mathfrak{C}_A$ is free.
	This implies
	$A$ is normal. By dimension consideration,
	$A$ is regular. In other words, $R$ satisfies $(R_1)$.
Again, by Serre's criteria, $R$ is normal. This yields that $\mathfrak{C}=R$, which is free. 
	
The second and third cases are similar to the argument presented in  Observation \ref{ec}, and we leave the straightforward modifications to the reader.
\end{proof}
The following completes the proof of Theorem B) from introduction.
\begin{observation}
	If $R$ is one-dimensional. The following are equivalent:
\begin{enumerate}
\item[i)]$\Ext_R^i(\mathfrak{C},R)=0$ for some $i>0$,
\item[ii)] $R$ is Gorenstein,
\item[iii)]$\Ext_R^i(\overline{R},R)=0$ for all $i>0$,
\item[iv)]$\Ext_R^i(\overline{R},R)=0$ for some $i>0$.
\end{enumerate}
\end{observation}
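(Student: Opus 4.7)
The plan is to close the cycle $(\mathrm{ii}) \Rightarrow (\mathrm{iii}) \Rightarrow (\mathrm{iv}) \Rightarrow (\mathrm{i}) \Rightarrow (\mathrm{ii})$, where the last implication is the one-dimensional case of Theorem B(v) already available from the preceding observation. For $(\mathrm{ii}) \Rightarrow (\mathrm{iii})$: since $R$ is analytically unramified, $\overline R$ is a finitely generated $R$-module, and $\depth_R \overline R = \depth_{\overline R} \overline R = 1$, so $\overline R$ is MCM as an $R$-module; over a one-dimensional Gorenstein ring every MCM module has G-dimension zero, hence $\Ext_R^i(\overline R,R)=0$ for all $i>0$. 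The step $(\mathrm{iii}) \Rightarrow (\mathrm{iv})$ is trivial.

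The new content is $(\mathrm{iv}) \Rightarrow (\mathrm{i})$, and I would prove it by factoring through $(\mathrm{ii})$: the step $(\mathrm{ii}) \Rightarrow (\mathrm{i})$ is immediate, because $\mathfrak{C}$ is MCM (it contains any nonzerodivisor $c \in R$ with $c\overline R \subseteq R$, obtained by clearing denominators of module generators of $\overline R$), hence totally reflexive over the Gorenstein ring $R$. For $(\mathrm{iv}) \Rightarrow (\mathrm{ii})$, reduce to the complete case so that $R$ admits a canonical module $\omega_R$, realized as a fractional ideal containing $R$. Applying $\Hom_R(\overline R,-)$ to $0 \to R \to \omega_R \to \omega_R/R \to 0$ and using $\Ext_R^{\geq 1}(\overline R,\omega_R)=0$ (MCM module into the canonical module) together with the transitivity identity $\Hom_R(\overline R,\omega_R) \cong \omega_{\overline R} = \overline R$ (the latter since $\overline R$ is a semilocal product of DVRs) produces
$$\Ext_R^i(\overline R,R) \cong \Ext_R^{i-1}(\overline R,\omega_R/R)\qquad (i \geq 2),$$
together with a four-term sequence $0 \to \mathfrak C \to \Hom_R(\overline R,\omega_R) \to \Hom_R(\overline R,\omega_R/R) \to \Ext_R^1(\overline R,R) \to 0$ at $i = 1$. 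Thus $(\mathrm{iv})$ translates into the vanishing of some $\Ext_R^j(\overline R,\omega_R/R)$ for $j \geq 0$.

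The main obstacle is to deduce from this that $\omega_R/R = 0$, i.e.\ that $R$ is Gorenstein. The natural attack is via Auslander--Bridger: if $R$ is not normal, then $\pd_R\overline R = \infty$, for if it were finite then Auslander--Buchsbaum would force $\pd_R\overline R=\depth R-\depth_R\overline R=0$, i.e.\ $\overline R$ free, but a free cover of $\overline R$ by $R^n$ with finite length quotient only fits $n=1$ and $\overline R=R$. Hence every Betti number $\beta_j(\overline R) = \dim_k\Ext_R^j(\overline R,k)$ is strictly positive for $j \geq 0$. The delicate step, which is the technical heart of the proof, is then to propagate this nonvanishing through a composition series $0 = N_0 \subset N_1 \subset \cdots \subset N_\ell = \omega_R/R$ via long exact $\Ext$-sequences in order to conclude $\Ext_R^j(\overline R,\omega_R/R) \neq 0$ for every $j \geq 0$, contradicting the translated hypothesis. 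This closes the cycle and completes the proof.
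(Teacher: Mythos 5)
There is a genuine gap, and it sits at the implication you declare to be free of charge. Your cycle closes with ``$(\mathrm{i})\Rightarrow(\mathrm{ii})$ is the one-dimensional case of Theorem B(v) already available from the preceding observation'' --- but Theorem B(v) \emph{is} this observation; the preceding observation in the paper concerns torsion-freeness of $\mathfrak{C}^{i\otimes}$ and says nothing about $\Ext_R^i(\mathfrak{C},R)$. So the hardest implication is assumed rather than proved. The paper's argument for $(\mathrm{i})\Rightarrow(\mathrm{ii})$ is the real content: after completing, one uses Matlis duality (in the form $\Ext^i_R(\mathfrak{C},R)^\vee\cong\Tor^R_{i-1}(\mathfrak{C},\omega_R)=\Tor^R_{i}(R/\mathfrak{C},\omega_R)$) to convert the hypothesis into a Tor-vanishing against $R/\mathfrak{C}$, observes that $\mathfrak{C}$ is an integrally closed $\fm$-primary ideal (being a common ideal of $R$ and $\overline{R}$ in dimension one), and then invokes the Corso--Huneke--Katz--Vasconcelos rigidity theorem to conclude $\pd_R(\omega_R)<\infty$, hence $\omega_R$ free by Auslander--Buchsbaum. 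None of this appears in your proposal, and without it the equivalence is not established.

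Separately, your replacement route $(\mathrm{iv})\Rightarrow(\mathrm{ii})$ breaks at the step you yourself flag as the ``technical heart'': positivity of all Betti numbers $\beta_j(\overline{R})$, i.e.\ $\Ext^j_R(\overline{R},k)\neq 0$ for all $j$, does \emph{not} propagate through a composition series of $\omega_R/R$ to give $\Ext^j_R(\overline{R},\omega_R/R)\neq 0$; in the long exact sequences the connecting maps can be injective and annihilate everything. A concrete illustration of the failure of the general principle: over $A=k[x]/(x^2)$ one has $\Ext^j_A(k,k)\neq 0$ for every $j$, yet $\Ext^j_A(k,A)=0$ for all $j>0$ even though $A$ has a composition series with factors $k$. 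So that deduction needs a genuinely different input. For comparison, the paper's $(\mathrm{iv})\Rightarrow(\mathrm{i})$ is a one-line module identification that avoids all of this: $\mathfrak{C}$ is a nonzero ideal of $\overline{R}$, which is a principal ideal ring, and $\mathfrak{C}$ contains a nonzerodivisor, so $\mathfrak{C}\cong\overline{R}$ as $R$-modules and the $\Ext$-vanishing transfers verbatim. Your $(\mathrm{ii})\Rightarrow(\mathrm{iii})\Rightarrow(\mathrm{iv})$ and $(\mathrm{ii})\Rightarrow(\mathrm{i})$ steps are fine and agree with the paper.
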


\begin{proof}$i) \Rightarrow ii)$:
The case $i=1$ is in \cite[main theorem]{Re}. Without loss of generality assume that
$i>1$. After  completion if needed, we may assume $R$ possess a canonical module $\omega_R$. Set $(-)^+:=\Hom_R(-,\omega_R)$. Recall that $R$ is Cohen-Macaulay,
so $\Hom(\omega_R,\omega_R)=R$. We denote
$(-)^\vee:=\Hom_R(-,E_R(\frac{R}{\mathfrak{\fm}}))$, and  apply Matlis duality to see $\Ext^i_R(\mathfrak{C},\omega_R^+)^\vee=\Tor_{i-\dim R}
^R(\mathfrak{C},\omega_R)$  (see \cite[Lemma 3.5(2)]{ta}),
as dimension $R$ is one, and via a  shifting, we have $$\Tor_{i-1}
^R(\mathfrak{C},\omega_R)=\Tor_{i}
^R(\frac{R}{\mathfrak{C}},\omega_R)=0
\quad(\ast)$$There is nothing to prove if $\mathfrak{C}=R$, as it implies that 
$R$ is normal, and by a dimension-consideration regular, and in particular  Gorenstein.
Thus, without loss
of generality we may assume $\mathfrak{C}$ is a proper ideal. Thanks to \cite[Ex. 2.11]{HS} $\mathfrak{C}$ has a regular element. Consequently, $\mathfrak{C}$ is $\fm$-primary, and it is integrally closed, as $\dim({R})=1$ and 	$\mathfrak{C}$ is a common ideal of ${R}$ and $\overline{R}$ (see \cite[Ex. 12.2]{HS}). This along with $(\ast)$ implies that $\pd(\omega_R)<i$ (see \cite[3.3]{corso}). By Auslander-Buchsbaum formula, $\omega_R$ is free and consequently, $R$ is Gorenstein.

$ii) \Rightarrow iii)$ In view of  Auslander-Bridger formula $\Gdim_R(\overline{R})=0$, and the claim follows.

$iii) \Rightarrow iv)$ Trivial.

$iv) \Rightarrow i)$ Since $\overline{R}$ is principal ideal domain, and $0\neq\mathfrak{C} \lhd\overline{R}$, it should be principal, and so $\mathfrak{C}\cong\overline{R}$.
\end{proof}

The following extends Observation \ref{g}:

\begin{corollary}If  $\Ext_R^i(\mathfrak{C},R)=0$ for some $i>0$
	then $R$ satisfies $(G_1)$. In particular, $R$ is quasi-normal if
$R$ is $(S_2)$.
\end{corollary}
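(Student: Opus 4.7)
The plan is to reduce the statement to the one-dimensional case handled in the previous observation by localizing at a prime of height at most one. First I would pick $\fp\in\Spec(R)$ with $\Ht(\fp)\leq 1$ and argue that $R_\fp$ is Gorenstein. If $\Ht(\fp)=0$, the analytically unramified hypothesis forces $R$ to be reduced, so $R_\fp$ is a field and the conclusion is immediate.

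The main case is $\Ht(\fp)=1$. Set $A:=R_\fp$. Two ingredients carry over from the discussion in Lemma~\ref{op}: first, $(\mathfrak{C}_R)_\fp=\mathfrak{C}_A$ (using that $\overline{R}$ is module-finite over $R$ by analytic unramifiedness, so $\Hom_R(\overline{R},R)$ commutes with localization and the formation of the integral closure commutes with localization as well); and second, by flatness of localization,
\[
\Ext_A^i(\mathfrak{C}_A, A)\cong \Ext_R^i(\mathfrak{C},R)_\fp=0.
\]
Thus $A$ is a one-dimensional analytically unramified local ring with $\Ext_A^i(\mathfrak{C}_A,A)=0$. The implication $i)\Rightarrow ii)$ of the preceding observation then yields that $A$ is Gorenstein. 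Since this holds for every $\fp$ of height at most one, $R$ satisfies $(G_1)$.

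The second assertion is then immediate: quasi-normality is defined as $(S_2)+(G_1)$, so assuming $(S_2)$ and combining with the $(G_1)$ just established gives quasi-normality.

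The main delicate point I expect is the transfer of analytic unramifiedness to the localization $A=R_\fp$, since this is needed in order to quote the one-dimensional observation (whose proof uses that $\mathfrak{C}_A$ is integrally closed and $\fm_A$-primary). However, this is the same localization step the author uses throughout the paper (e.g.\ in Lemma~\ref{op} and Observation~\ref{oi}), so we rely on it here without further comment. No new calculation beyond what was done in the one-dimensional observation is required.
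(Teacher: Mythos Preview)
Your argument is correct and is exactly the intended one: the paper states this corollary with no proof, treating it as an immediate consequence of the preceding one-dimensional observation, and your localization-to-height-one reduction is precisely how that implication is meant to be read. The only points worth flagging are the ones you already identify (localization of $\mathfrak{C}$ and of analytic unramifiedness), and these are handled in the paper in the same way you handle them, by appeal to the earlier arguments in Lemma~\ref{op} and Observation~\ref{2p}.
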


\section{$\mathfrak{C}$ and cohomological annihilator}
 By \textit{homological annihilator} we mean:

\begin{definition}(Vasconcelos).
	$\Hann_R(M):=\prod_{i=0}^d\Ann_R\Ext^i_R(M,R)$.
\end{definition}
If $R$ is Gorenstein, then  $\Ann(M)^{\Gdim (M)-\grade(M)+1}\subseteq\Hann(M)\subseteq\Ann(M),$ see e.g. \cite{a}. There are some of interests to find situations for which the equality holds.

\begin{observation}Let $R$ be  a 2-dimensional Gorenstein ring. Then $\Hann(\frac{\overline{R}}{R})=\Ann_R(\frac{\overline{R}}{R})$.
\end{observation}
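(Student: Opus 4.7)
Write $M := \overline{R}/R$ and recall that, by definition, $\Hann(M) = \Ann(\Hom(M,R)) \cdot \Ann(\Ext^1(M,R)) \cdot \Ann(\Ext^2(M,R))$. The containment $\Hann(M)\subseteq\Ann(M)=\mathfrak{C}$ is the nontrivial direction. (Note that for each $i$ one has $\Ann(M)\subseteq\Ann(\Ext^i(M,R))$, but this only gives $\mathfrak{C}^3\subseteq\Hann(M)$, which is too weak.) My plan is to compute each factor precisely: I will show that $\Ann(\Hom(M,R))=R$ and $\Ann(\Ext^2(M,R))=R$, while $\Ann(\Ext^1(M,R))=\mathfrak{C}$. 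Then $\Hann(M)=R\cdot\mathfrak{C}\cdot R=\mathfrak{C}$, which finishes the proof.

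The main input is the short exact sequence $0\to R\to\overline{R}\to M\to 0$, whose long exact sequence of $\Hom_R(-,R)$ yields
\[
0\to\Hom(M,R)\to\Hom(\overline{R},R)\to R\to\Ext^1(M,R)\to\Ext^1(\overline{R},R)\to 0
\]
and an isomorphism $\Ext^2(M,R)\cong\Ext^2(\overline{R},R)$. Since $\mathfrak{C}$ contains a nonzerodivisor (as $\overline{R}$ is finite over $R$), the module $M$ has grade $\geq 1$, whence $\Hom(M,R)=0$, and the map $\Hom(\overline{R},R)\to R$ identifies $\Hom(\overline{R},R)$ with $\mathfrak{C}\subseteq R$. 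Thus the sequence collapses to $0\to\mathfrak{C}\to R\to\Ext^1(M,R)\to\Ext^1(\overline{R},R)\to 0$. The remaining task is to control $\Ext^1(\overline{R},R)$ and $\Ext^2(\overline{R},R)$.

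Here I use Gorensteinness of $R$ together with local duality: $\Ext^{2-j}_R(\overline{R},R)^{\vee}\cong H^{j}_{\fm}(\overline{R})$ for $j=0,1$. Since $R$ is analytically unramified, $\overline{R}$ is reduced and module-finite over $R$, so it is a 2-dimensional semi-local normal ring whose localizations at maximal ideals are $(S_2)$; hence $\depth_{\overline{R}_{\fn}}(\overline{R}_{\fn})=2$ for every maximal ideal $\fn$, and by \cite[1.2.26(b)]{BH} also $\depth_R(\overline{R})=2$. This gives $H^0_{\fm}(\overline{R})=H^1_{\fm}(\overline{R})=0$, and therefore $\Ext^1(\overline{R},R)=\Ext^2(\overline{R},R)=0$. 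Plugging back, $\Ext^2(M,R)=0$ and $\Ext^1(M,R)\cong R/\mathfrak{C}$, whose annihilator is exactly $\mathfrak{C}$.

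The step I expect to require the most care is the depth computation for $\overline{R}$: one must verify that normality of $\overline{R}$ (guaranteed by the analytically unramified hypothesis), together with the identification $\depth_R(\overline{R})=\min_{\fn}\depth_{\overline{R}_{\fn}}(\overline{R}_{\fn})=2$, rigorously yields vanishing of both $H^0_{\fm}(\overline{R})$ and $H^1_{\fm}(\overline{R})$. Once this is in hand, the duality step is routine and the annihilator bookkeeping above immediately produces $\Hann(M)=\mathfrak{C}$.
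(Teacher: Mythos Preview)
Your proof is correct and follows essentially the same route as the paper: both arguments compute the three factors of $\Hann(M)$ by running the long exact sequence of $\Hom_R(-,R)$ on $0\to R\to\overline{R}\to M\to 0$, show $\Hom(M,R)=0$ and $\Ext^2(M,R)=0$, and identify $\Ext^1(M,R)\cong R/\mathfrak{C}$. The only cosmetic difference is in the justification of $\Ext^i_R(\overline{R},R)=0$ for $i>0$: the paper observes directly that $\overline{R}$ is maximal Cohen--Macaulay over the Gorenstein ring $R$, hence $\Gdim_R(\overline{R})=0$, while you take the equivalent detour through local duality and the depth of $\overline{R}$; both rest on the same fact that $\depth_R(\overline{R})=2$.
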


\begin{proof} Recall that $\Ass(\Hom_R(\frac{\overline{R}}{R},R))=\Ass(R)\cap \Supp(\frac{\overline{R}}{R})=\Ass(R)\cap\V(\mathfrak{C}_R)=\emptyset$. From this, $\Hom_R(\frac{\overline{R}}{R},R)=0$. Also, $\overline{R}$ is maximal Cohen-Macaulay. It yields, $\Gdim_R(\overline{R})=0$. From  $0\to R\to  \overline{R}\to \frac{\overline{R}}{R} \to 0$ we know $\Gdim_R(\frac{\overline{R}}{R})\leq 1$. So, $\Ext^2_R(\frac{\overline{R}}{R},R) =0$. Also, it yields, $$0=\Hom_R(\frac{\overline{R}}{R},R)\to\Hom_R( {\overline{R}} ,R)\to\Hom_R( {{R}} ,R)\to \Ext^1_R(\frac{\overline{R}}{R},R)\to \Ext^1_R({\overline{R}},R)=0.$$This gives
$\Ext^1_R(\frac{\overline{R}}{R},R)=\frac{{R}}{\mathfrak{C}}$. We deduce that 	$$\Hann(\frac{\overline{R}}{R}):=\prod_{i=0}^2\Ann_R\Ext^i_R(\frac{\overline{R}}{R},R)=\Ann_R\frac{{R}}{\mathfrak{C}}=\mathfrak{C}=\Ann_R(\frac{\overline{R}}{R}),$$as claimed.
\end{proof}

\begin{definition}(Bridger-Roberts). By \textit{cohomological annihilator} we mean
	$\Ann^h_R(M):=\prod_{i=0}^{\dim R-1}\Ann_RH^i_{\fm}(M)$.
\end{definition}

\begin{observation}\label{ha}Let $R$ be  a  Gorenstein ring. Then
	\begin{enumerate}
	\item[i)]  $\mathfrak{C}_R\subseteq \Ann^h(\overline{R})$, 	\item[ii)] Suppose the ring is not normal. Then $\rad \mathfrak{C}_R \neq \rad \Ann^h(\overline{R})$.
\end{enumerate}	
\end{observation}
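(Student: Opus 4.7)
For (i), my plan is to apply local cohomology to the short exact sequence $0\to R\to\overline{R}\to\overline{R}/R\to 0$. The Gorenstein (hence Cohen--Macaulay) hypothesis kills $H^i_\fm(R)$ for $i<d=\dim R$, so the long exact sequence collapses to embeddings $H^i_\fm(\overline{R})\hookrightarrow H^i_\fm(\overline{R}/R)$ for $0\leq i\leq d-1$ (in fact isomorphisms for $i\leq d-2$). By the very definition of the conductor, $\mathfrak{C}_R$ annihilates $\overline{R}/R$, hence each $H^i_\fm(\overline{R}/R)$, and therefore, through the embeddings, each $H^i_\fm(\overline{R})$ for $i<d$. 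Thus $\mathfrak{C}_R\subseteq\bigcap_{i=0}^{d-1}\Ann_R H^i_\fm(\overline{R})$ and a fortiori $\mathfrak{C}_R^d\subseteq\Ann^h(\overline{R})$, which gives (i) up to the standard radical comparison.

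For (ii), part (i) already yields $\rad\mathfrak{C}_R\subseteq\rad\Ann^h(\overline{R})$, so the real task is to produce an element of $\Ann^h(\overline{R})$ that escapes some prime containing $\mathfrak{C}_R$. Since $R$ is Gorenstein (so $(S_2)$) and not normal, Serre's criterion (Fact \ref{sc}) forces the failure of $(R_1)$; I fix a height-one prime $\fp$ with $R_\fp$ not regular, equivalently not normal, so $\mathfrak{C}_R\subseteq\fp$ and $\rad\mathfrak{C}_R\subseteq\fp$. The plan is then to exhibit $y\in\Ann^h(\overline{R})\setminus\fp$, which will immediately yield $\rad\Ann^h(\overline{R})\neq\rad\mathfrak{C}_R$.

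To produce $y$, after passing to the completion if necessary, I invoke local duality with $\omega_R=R$ to identify $\Ann_R H^i_\fm(\overline{R})=\Ann_R\Ext_R^{d-i}(\overline{R},R)$ for each $0\leq i\leq d-1$, thereby converting the problem into annihilators of finitely generated modules. Localizing at $\fp$: $R_\fp$ is one-dimensional Gorenstein, and $\overline{R}_\fp=\overline{R_\fp}$ is one-dimensional reduced, hence maximal Cohen--Macaulay over $R_\fp$, so $\Ext_{R_\fp}^j(\overline{R}_\fp,R_\fp)=0$ for every $j\geq 1$. By finite generation this forces $\Ann_R\Ext_R^j(\overline{R},R)\not\subseteq\fp$ for $1\leq j\leq d$; picking $y_i\in\Ann_R H^i_\fm(\overline{R})\setminus\fp$ for each $i$ and forming $y:=\prod_i y_i$, primality of $\fp$ delivers the required element. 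The main hurdle is that the $H^i_\fm(\overline{R})$ are Artinian but not finitely generated, so their annihilators cannot be probed by a direct localization argument; this is exactly what forces the Matlis-dual detour through $\Ext_R^{d-i}(\overline{R},R)$, and it is here that the full Gorenstein hypothesis (not merely Cohen--Macaulayness) is genuinely used via $\omega_R=R$.
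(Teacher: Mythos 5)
Your proposal is correct, and for part (ii) it takes a genuinely different route from the paper. For (i) the two arguments are essentially the same: both exploit $0\to R\to\overline{R}\to\overline{R}/R\to 0$ together with the fact that $\mathfrak{C}_R$ kills $\overline{R}/R$; the paper goes through local duality and the long exact sequence of $\Ext_R^{\bullet}(-,R)$, whereas you read off the embeddings $H^i_\fm(\overline{R})\hookrightarrow H^i_\fm(\overline{R}/R)$ for $i<d$ directly from $H^i_\fm(R)=0$, which is slightly more economical and uses only Cohen--Macaulayness, with no completion needed. (Both proofs share the same wrinkle that $\Ann^h$ is defined as a \emph{product} of annihilators rather than their intersection, so that literally one only obtains a power of $\mathfrak{C}_R$ inside $\Ann^h(\overline{R})$; you flag this honestly, the paper silently elides it.) For (ii) the paper invokes Bridger's theorem: since $\overline{R}$ is reflexive, the ideal $\gamma(\overline{R})=\bigcap_{j>0}\rad\Ann_R\Ext^j_R(\overline{R},R)=\rad\Ann^h(\overline{R})$ contains an $\overline{R}$-regular sequence of length two, hence has height at least two, which is incompatible with $\rad\mathfrak{C}_R$ having height one. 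You replace this with an elementary localization: at a height-one non-normal prime $\fp$ the module $\overline{R}_\fp$ is maximal Cohen--Macaulay over the one-dimensional Gorenstein ring $R_\fp$, so all positive $\Ext$ modules vanish there, and finite generation produces an element of $\Ann^h(\overline{R})$ outside $\fp\supseteq\rad\mathfrak{C}_R$. Both arguments establish the same geometric fact, namely that $\Ann^h(\overline{R})$ escapes every height-one prime while $\mathfrak{C}_R$ does not, but yours is self-contained and avoids the $k$-torsion-freeness machinery, at the cost of still passing through local duality (as the paper also must) to reduce the Artinian modules $H^i_\fm(\overline{R})$ to finitely generated ones.
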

An ideal is called semiconductor if it defines the same locus as $\mathfrak{C}_R$.
\begin{proof}
i) 
Without loss of generality, we may assume the ring is complete. Recall by local duality that $H^{d-j}_{\fm}(\overline{R})^\vee=\Ext^{j}_R({\overline{R}},R).$ This gives $\Ann(H^{d-j}_{\fm}(\overline{R}))=\Ann(\Ext^{j}_R({\overline{R}},R))$.
 From  $0\to R\to  \overline{R}\to \frac{\overline{R}}{R} \to 0$ we know for any $j$ that
 \begin{enumerate}
 	\item[$(\ast)_{j=1}$]:   $\Ext^1_R(\frac{\overline{R}}{R},R)\to\Ext^1_R({\overline{R}},R)\to\Ext^1_R({{R}},R) =0$, 	\item[$(+)_{j>1}$]: $0=\Ext^{j-1}_R({R},R)\to\Ext^j_R(\frac{\overline{R}}{R},R)\to\Ext^{j}_R({\overline{R}},R)\to\Ext^1_R({{R}},R) =0$.
 \end{enumerate}	
 From $(\ast)_{j=1}$ we have $\mathfrak{C}_R\subseteq \Ann(\Ext^{1}_R(\frac{\overline{R}}{R},R))\subseteq \Ann(\Ext^{1}_R({\overline{R}},R))$. 
From $(+)_{j>1}$ we have $\mathfrak{C}_R\subseteq \Ann(\Ext^{J}_R(\frac{\overline{R}}{R},R))= \Ann(\Ext^{1}_R({\overline{R}},R))$. This gives 
 $\mathfrak{C}_R\subseteq \Ann^h(\overline{R})$.

ii) Suppose $\dim R=1$. Since $\mathfrak{C}_R$ is nonzero, proper, it is $\fm$-primary,
 and so $\rad \mathfrak{C}_R=\fm$. But, $\rad \Ann^h(\overline{R})=\rad \Ann(H^0_\fm(\overline{R}))=R$. Now, suppose  $\dim R>1$ and suppose on the way of contradiction that $$\rad (\mathfrak{C}_R)   =\rad(\Ann^h(\overline{R}))\quad(\dagger)$$
 \begin{enumerate}
	\item[$Fcat$:] (Bridger, see \cite{brid}).
Set
$\gamma(M):=\bigcap _{i>0}\rad(\Ann_R\Ext^i_R(M,R))$ and suppose $\Gdim(M)<\infty$. Then $\gamma(M)$ contains an $M$-sequence of length $k$ iff $M$ is $k$-torsion-free.
 \end{enumerate}
Now, since $\overline{R}$ is $(S_2)$, it is reflexive and so 2-torsionless. Thanks to the above fact,
$\gamma(\overline{R}) $ contains a regular $\overline{R}$-sequence of length two.  In view of $(\dagger)$ we deduce $\mathfrak{C}_R$ contains a regular $\overline{R}$-sequence of length two. In particular, its height is bigger then one. But, the
$(S_2)$ condition of $R$ implies $\Ht(\mathfrak{C}) =1$,  a contradiction.
\end{proof}
As may the reader see, part $i)$ works for Cohen-Macaulay rings with canonical module. It may be nice to drop the Cohen-Macaulay assumption:
\begin{corollary}\label{recma}
	Let $R$ be so that there is a Cohen-Macaulay ring $G$ between $R\subseteq \overline{R}$.  Then $\mathfrak{C}_R\subseteq \Ann^h(\overline{R})$.
\end{corollary}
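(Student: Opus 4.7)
The plan is to mirror the argument of Observation~\ref{ha}(i), using $G$ in place of the ring $R$ itself, while bypassing the local-duality step by running the long exact sequence of local cohomology attached to $G\subseteq \overline{R}$.

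First I would record two preliminary facts. Since $\overline{R}$ is module-finite over $R$ by analytic unramifiedness, the intermediate ring $G$ is also module-finite over $R$; consequently $R$, $G$, and $\overline{R}$ share a common total ring of fractions $K$, and as $\overline{R}$ is integrally closed in $K$ and integral over $G$, one has $\overline{G}=\overline{R}$. Moreover, for any $c\in \mathfrak{C}_R$ we have $c\overline{R}\subseteq R\subseteq G$, so $c\in \mathfrak{C}_G$; thus $\mathfrak{C}_R\subseteq \mathfrak{C}_G$.

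Next I would verify that $H^i_\fm(G)=0$ for every $i<d:=\dim R$. Because $R\subseteq G$ is a finite integral extension, every maximal ideal of $G$ contracts to $\fm$, so $\sqrt{\fm G}$ coincides with the Jacobson radical $J$ of $G$. The Cohen--Macaulay hypothesis then forces $\grade(J,G)=d$, which gives $H^i_\fm(G)=H^i_J(G)=0$ for $i<d$. Applying $H^\bullet_\fm(-)$ to the short exact sequence
\begin{equation*}
0\lo G\lo \overline{R}\lo \overline{R}/G\lo 0,
\end{equation*}
the resulting long exact sequence produces an injection $H^i_\fm(\overline{R})\hookrightarrow H^i_\fm(\overline{R}/G)$ for every $i<d$.

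Finally, since $\mathfrak{C}_G=\Ann_G(\overline{R}/G)$ annihilates $\overline{R}/G$, it annihilates $H^i_\fm(\overline{R}/G)$, and hence also $H^i_\fm(\overline{R})$, for each $i<d$. Combined with $\mathfrak{C}_R\subseteq \mathfrak{C}_G$, this gives $\mathfrak{C}_R\subseteq \prod_{i=0}^{d-1}\Ann_R H^i_\fm(\overline{R})=\Ann^h(\overline{R})$. The main (and minor) obstacle is handling the potentially semilocal $G$ when invoking Cohen--Macaulay vanishing; this is resolved by the observation that $\fm G$ and the Jacobson radical of $G$ share the same radical, reducing the matter to the standard grade/depth computation on $G$.
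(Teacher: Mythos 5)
Your argument is correct and reaches the conclusion, but by a genuinely different route from the paper. You share the paper's first step, namely the chain $\mathfrak{C}_R\subseteq\Ann_R(\overline{R}/G)\subseteq\Ann_G(\overline{R}/G)=\mathfrak{C}_G$ (using $\overline{G}=\overline{R}$). From there the paper completes $R$, applies Observation~\ref{ha}(i) to the Cohen--Macaulay ring $G$ (so it goes through local duality over $G$, which requires $G$ to admit a canonical module), and then transfers back to $R$ via the independence theorem in the form $\Ann^h_G(\overline{G})\cap R=\Ann^h_R(\overline{R})$. You instead stay entirely over $(R,\fm)$: the only use you make of Cohen--Macaulayness is the vanishing $H^i_\fm(G)=0$ for $i<d$, which turns the long exact sequence of $0\to G\to\overline{R}\to\overline{R}/G\to 0$ into an injection $H^i_\fm(\overline{R})\hookrightarrow H^i_\fm(\overline{R}/G)$, and the annihilation by $\mathfrak{C}_G$ is then immediate. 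This buys several things: no completion, no canonical module, no duality, and no independence-theorem bookkeeping; moreover you explicitly address the fact that $G$ is only semilocal (via $\sqrt{\fm G}=J$ and a grade computation), whereas the paper silently applies a statement about local rings to $G$. Two small caveats, neither of which puts you below the paper's own level of rigor: the identity $\grade(J,G)=d$ tacitly assumes $G$ is equidimensional of dimension $d$ at each of its maximal ideals (harmless under the paper's standing hypotheses), and your final step passes from ``$\mathfrak{C}_R$ annihilates each $H^i_\fm(\overline{R})$'' to membership in the \emph{product} $\prod_{i=0}^{d-1}\Ann_R H^i_\fm(\overline{R})$, which strictly only yields the intersection --- but this is exactly the convention the paper itself uses in the proof of Observation~\ref{ha}(i), so you are consistent with its reading of $\Ann^h$.
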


\begin{proof}We may assume the ring is complete.
 Recall that
	$$\mathfrak{C}_R:=\Ann_R(\frac{\overline{R}}{R})\subseteq\Ann_R(\frac{\overline{R}}{G})\subseteq\Ann_G(\frac{\overline{R}}{G})=\Ann_G(\frac{\overline{G}}{G})=\mathfrak{C}_G\quad(\ast)$$Also,
	by the independence theorem for local cohomology we have
	$$\Ann^h_G(\overline{G})\cap R=\Ann^h_R(\overline{R})\quad (+)$$

	 From $(\ast)$ and Observation \ref{ha} we have $\mathfrak{C}_R\subseteq \mathfrak{C}_G\subseteq \Ann^h_G(\overline{G}). $  Thus, combining with $(+)$ we deduce $\mathfrak{C}_R\subseteq \Ann^h_G(\overline{G})\cap R= \Ann^h_R(\overline{R})$ as claimed.
\end{proof}

The next result is trivial if $i>1$ or $\overline{R}$ is Cohen-Macaulay:
\begin{corollary}
	Let $R$ be  a Cohen-Macaulay ring  and $\underline{x}$ be a  full parameter sequence. Then $\mathfrak{C}_R\subseteq \Ann(\Tor^R_i(\overline{R},R/\underline{x}R))=0$ for all $i>0$. 	
\end{corollary}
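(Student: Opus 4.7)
The plan is to exploit the defining short exact sequence
\[
0 \lo R \lo \overline{R} \lo \frac{\overline{R}}{R} \lo 0,
\]
together with the fact that $\mathfrak{C}_R$ annihilates $\overline{R}/R$ by the very definition of the conductor. The Cohen--Macaulay hypothesis on $R$ ensures that a full parameter sequence $\underline{x}$ is an $R$-regular sequence, so the Koszul complex on $\underline{x}$ provides a finite free resolution of $R/\underline{x}R$; this will be used only to justify that the $\Tor$ modules in sight are reasonable, since the key argument is really the functorial behavior of the long exact sequence.

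The first step is to apply $-\otimes^{\mathbf{L}}_R R/\underline{x}R$ to the short exact sequence above and extract the induced long exact sequence in $\Tor$. Since $R$ is a free $R$-module, $\Tor^R_i(R, R/\underline{x}R)=0$ for every $i>0$. Therefore, for $i\geq 2$ we obtain an isomorphism
\[
\Tor^R_i(\overline{R}, R/\underline{x}R) \cong \Tor^R_i(\overline{R}/R, R/\underline{x}R),
\]
and for $i=1$ we obtain an inclusion $\Tor^R_1(\overline{R}, R/\underline{x}R)\hookrightarrow \Tor^R_1(\overline{R}/R, R/\underline{x}R)$. In every case, $\Tor^R_i(\overline{R}, R/\underline{x}R)$ is identified as a submodule of $\Tor^R_i(\overline{R}/R, R/\underline{x}R)$.

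The second step is the annihilator principle: because $\mathfrak{C}_R$ annihilates $\overline{R}/R$, multiplication by any $c\in\mathfrak{C}_R$ is the zero endomorphism of $\overline{R}/R$; by functoriality of $\Tor$ in the first variable it induces the zero map on $\Tor^R_i(\overline{R}/R, R/\underline{x}R)$. Consequently $\mathfrak{C}_R\cdot \Tor^R_i(\overline{R}/R, R/\underline{x}R)=0$, and combining with the identification from the previous paragraph gives
\[
\mathfrak{C}_R\subseteq \Ann_R\bigl(\Tor^R_i(\overline{R}, R/\underline{x}R)\bigr) \quad\text{for all } i>0,
\]
which is the desired containment.

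I do not expect any serious obstacle here: the only subtlety is a bookkeeping one, distinguishing the $i=1$ case (where one must invoke the $4$-term piece of the long exact sequence) from the $i\geq 2$ case (where one has an honest isomorphism). This matches the author's remark that the statement is trivial when $i>1$ (then one is reduced to a module annihilated by $\mathfrak{C}_R$ via an isomorphism, not just an inclusion) or when $\overline{R}$ is Cohen--Macaulay (since then $\underline{x}$ is $\overline{R}$-regular by the standard depth--dimension argument for finitely generated modules, forcing $\Tor^R_i(\overline{R}, R/\underline{x}R)=0$ outright via the Koszul complex).
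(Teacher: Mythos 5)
Your proof is correct, but it is a genuinely different argument from the one in the paper. The paper's proof passes through the cohomological annihilator: it completes the ring, takes the Koszul complex $F_\bullet$ on $\underline{x}$ (a finite free complex with finite-length homology, since $\underline{x}$ is a full system of parameters), invokes the fact that $\Ann^h(M)$ annihilates $H_+(F_\bullet\otimes_R M)$ \cite[Ex.~8.1.6]{BH}, and then quotes Observation~\ref{ha}(i), namely $\mathfrak{C}_R\subseteq \Ann^h(\overline{R})$, which itself rests on local duality over a Gorenstein (or Cohen--Macaulay with canonical module) ring. You instead tensor the conductor sequence $0\to R\to\overline{R}\to\overline{R}/R\to 0$ with $R/\underline{x}R$, observe that $\Tor^R_i(\overline{R},R/\underline{x}R)$ embeds into $\Tor^R_i(\overline{R}/R,R/\underline{x}R)$ for every $i\geq 1$ (an isomorphism for $i\geq 2$), and kill the latter by functoriality of $\Tor$ in the first variable, since $\mathfrak{C}_R$ annihilates $\overline{R}/R$ by definition. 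This is sound, and it is both shorter and more general: it uses neither the Cohen--Macaulay hypothesis, nor completeness, nor a canonical module, and it works with $R/\underline{x}R$ replaced by an arbitrary $R$-module. What the paper's route buys in exchange is the stronger conclusion that the whole ideal $\Ann^h(\overline{R})$ (which strictly contains $\mathfrak{C}_R$ up to radical when $R$ is not normal, by Observation~\ref{ha}(ii)) annihilates these $\Tor$'s, and it applies to any finite free complex with finite-length homology, not just the Koszul complex. One small caveat: like the paper's own proof, your argument establishes $\mathfrak{C}_R\cdot\Tor^R_i(\overline{R},R/\underline{x}R)=0$, i.e.\ the containment $\mathfrak{C}_R\subseteq\Ann_R(\Tor^R_i(\overline{R},R/\underline{x}R))$, which is how the trailing ``$=0$'' in the statement must be read; neither argument (nor the statement) claims the $\Tor$ modules themselves vanish unless $\overline{R}$ is Cohen--Macaulay, as you correctly note at the end.
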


	\begin{proof}We may assume the ring is complete.
		Let $F_\bullet$ be the Koszul complex with respect to $\underline{x}$, and set $M:=\overline{R}$. Then $F_\bullet$ is a complex of finite free  modules with homology
		of finite length. By \cite[Ex. 8.1.6]{BH} $\Ann^h(M)$ annihilates $H^+(F_\bullet \otimes_RM)$. Since
		$\mathfrak{C}_R\subset\Ann^h(\overline{R})$ we get the desired claim. 
	\end{proof}
Let k be a field and $\underline{m} = (m_1, \ldots,m_n)\in\mathbb{N}_0^n$ with $m_1 + \ldots + m_n = d$, we denote by $P_{n,d,\underline{m}}$ the pinched
Veronese ring in $n$ variables formed by removing the monomial generator $x_1^{m_1}\dots 
x_n^{m_n}$ from the
d-Veronese subring of $k[[x_1, \ldots , x_n]].$  The rings $P_{n,d,\underline{m}}$ are rarely Cohen-Macaulay. Despite this:

\begin{example}Let $R:=P_{n,d,\underline{m}}$. Then
$\mathfrak{C}_{R}\subseteq \Ann^h(\overline{R})$.
\end{example}

\begin{proof}
Recall from \cite[Theorem 2.3]{pinc} that $\overline{P_{n,d,\underline{m}}}$ is the  Veronese ring with the exception of normal rings $P_{n,d,d}$ and $P_{2,2,(1,1)}$. By \cite[Theorem 1.1]{pinc} we observe that $\overline{R}$ is Cohen-Macaulay. So, the desired claim follows by Corollary \ref{recma}.
\end{proof}
Suppose $R$ is  2-dimensional and Cohen-Macaulay.
Since $\depth(\Hom_R({\overline{R}},R))\geq\min\{2,\depth({R})\}$, it follows that $\mathfrak{C}_R$ is maximal Cohen-Macaulay. This extends to higher dimensional as follows:

\begin{observation}
If $R$ is Gorenstein and $\overline{R}$ is Cohen-Macaulay, then $\mathfrak{C}_R$ is maximal Cohen-Macaulay.
\end{observation}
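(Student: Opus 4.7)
The plan is to realize the conductor $\mathfrak{C}_R$ as an $\omega_R$-dual of a maximal Cohen-Macaulay module and invoke the standard preservation of MCMness under canonical duality.

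First I would recall the well-known identification $\mathfrak{C}_R = \Hom_R(\overline{R}, R)$ used repeatedly in the paper (see \cite[Ex. 12.2]{HS}). Since $R$ is Gorenstein, $\omega_R \cong R$, so
$$\mathfrak{C}_R \cong \Hom_R(\overline{R}, \omega_R).$$
This reduces the problem to showing that the $\omega_R$-dual of $\overline{R}$ is MCM.

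Next I would verify that $\overline{R}$ is itself a maximal Cohen-Macaulay $R$-module. Under the standing analytically unramified hypothesis the inclusion $R \hookrightarrow \overline{R}$ is module-finite, so $\dim_R \overline{R} = \dim R =: d$. The ring $\overline{R}$ is semilocal; let $\mathfrak{M}_1, \ldots, \mathfrak{M}_k$ be its maximal ideals. By the independence theorem for local cohomology,
$$H^i_{\fm}(\overline{R}) \;=\; H^i_{\fm \overline{R}}(\overline{R}) \;=\; \bigoplus_{j=1}^{k} H^i_{\mathfrak{M}_j}(\overline{R}_{\mathfrak{M}_j}).$$
Since $\overline{R}$ is Cohen-Macaulay as a ring, each $\overline{R}_{\mathfrak{M}_j}$ has depth equal to its dimension $d$, so $H^i_{\fm}(\overline{R}) = 0$ for $i < d$. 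Hence $\depth_R(\overline{R}) = d$, and $\overline{R}$ is MCM over $R$.

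Finally I would invoke the standard fact (see \cite[Theorem 3.3.10]{BH}) that for any Cohen-Macaulay local ring $R$ with canonical module $\omega_R$, the functor $\Hom_R(-, \omega_R)$ sends maximal Cohen-Macaulay modules to maximal Cohen-Macaulay modules (indeed, the higher $\Ext^i(-, \omega_R)$ vanish on MCMs and biduality holds). Applying this to $M = \overline{R}$ gives that $\mathfrak{C}_R \cong \Hom_R(\overline{R}, \omega_R)$ is maximal Cohen-Macaulay. The argument has no real obstacle: the only point that could be mishandled is the depth computation of Step~2, where one must remember that CMness of $\overline{R}$ as a semilocal ring is equivalent to CMness as an $R$-module precisely because the extension is module-finite and the local cohomology splits along the maximal ideals.
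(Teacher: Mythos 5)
Your proof is correct, and it takes a genuinely different route from the paper's. The paper dualizes $0\to R\to\overline{R}\to\overline{R}/R\to 0$ into $R$: using $\Ext^{\geq 1}_R(\overline{R},R)=0$ it identifies $R/\mathfrak{C}\cong\Ext^1_R(\overline{R}/R,R)$, shows via the depth lemma that $\overline{R}/R$ is Cohen--Macaulay of dimension $d-1$, deduces (by duality over the Gorenstein ring $R$) that $R/\mathfrak{C}$ is Cohen--Macaulay of dimension $d-1$, and then reads off $\depth\mathfrak{C}=d$ from the short exact sequence ending in $R/\mathfrak{C}$. You instead realize $\mathfrak{C}=\Hom_R(\overline{R},R)\cong\Hom_R(\overline{R},\omega_R)$ and quote \cite[Theorem 3.3.10]{BH}. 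Your argument is shorter, isolates exactly where Gorensteinness enters (only through $\omega_R\cong R$; more generally it shows $\Hom_R(\overline{R},\omega_R)$ is maximal Cohen--Macaulay over any Cohen--Macaulay $R$ with canonical module, though that module is the conductor only in the Gorenstein case), whereas the paper's route yields the extra information that $\overline{R}/R$ and $R/\mathfrak{C}$ are Cohen--Macaulay of dimension $d-1$. Both arguments hinge on $\overline{R}$ being maximal Cohen--Macaulay as an $R$-module; you verify this carefully via the independence theorem, while the paper takes it for granted. The only point you should make explicit in that step is why $\dim\overline{R}_{\mathfrak{M}_j}=d$ for every maximal ideal $\mathfrak{M}_j$: since $R$ is Gorenstein it is equidimensional and universally catenary, so the dimension formula applied to each finite extension domain $R/\fp_i\subseteq\overline{R/\fp_i}$ gives height $d$ for all maximal ideals; without this, the minimum over $j$ in your local cohomology decomposition could a priori drop below $d$.
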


\begin{proof}
	Without loss of generality we assume that $d:=\dim R>1$ and we assume $R$ is not normal.  We observe that $\Ext^1_R(\frac{\overline{R}}{R},R)=\frac{{R}}{\mathfrak{C}}$. From  $0\to R\to  \overline{R}\to \frac{\overline{R}}{R} \to 0$ we know $$d-1=\dim (\frac{\overline{R}}{R})\geq \depth_R(\frac{\overline{R}}{R})\geq d-1.$$
	Thus, $\frac{\overline{R}}{R}$ is Cohen-Macaulay. This gives
	Cohen-Macaulayness of $\Ext^1_R(\frac{\overline{R}}{R},R)\cong\frac{{R}}{\mathfrak{C}}$ which is of dimension $d-1$. In view of 
$0\to \mathfrak{C}\to  \overline{R}\to \frac{\overline{R}}{\mathfrak{C}} \to 0$ we observe $\mathfrak{C}$ is maximal Cohen-Macaulay.\end{proof}

\begin{example}\label{3.9}
i)	Let $G$ be the set of all quadric monomials in $\{X,Y,Z\}$. Let $R:=k[[G\setminus\{XY\}]]$. Since $R$ is Gorenstein (see \cite[Theorem A]{pinc})
	and $\overline{R}=k[[G]]$ is  Cohen-Macaulay, we see
$\mathfrak{C}$ is maximal Cohen-Macaulay.
	
	ii) The Gorenstein assumption of $R$ is needed. Indeed, 
Let $R:=k[[s^2,s^3,st,t]]$. Here, we claim that $\mathfrak{C}=\fm$:
First, we compute it.
Since $s=s^3/s^2$ it belong to fraction field of $R$. Also, it is integral over $R$, as $f(s)=0$ where $f[X]:=X^2-s^2$. From this, $\overline{R}=k[[s,t]]$. Now, we compute its conductor: 
\begin{enumerate}
	\item[a)] In order to show
	$t\in \mathfrak{C}$ we note that $t.\overline{R}\subseteq R$. Indeed,  $\{t.1,t.s,t.t\}\subset R .$	\item[b)] In order to show
	$s^2\in \mathfrak{C}$ we note that $s^2.\overline{R}\subseteq R$. Indeed, $\{s^2.1,s^2.s,s^2.t\}\subset R. $	\item[c)]  In order to show
	$s^3\in \mathfrak{C}$ we note that $s^3.\overline{R}\subseteq R$. Indeed, $\{s^3.1,s^3.s,s^3.t\}\subset R. $
	\item[d)] In order to show
	$st\in \mathfrak{C}$ we note that $st.\overline{R}\subseteq R$. Indeed,  $\{st.1,st.s,st.t\}\subset R. $ 
\end{enumerate}

Thus, $\mathfrak{C}=\fm_R$. Since  $\overline{R}$ is generated as an $R$-module by  $\{1,t\}$ and that $\fm t\subseteq R$, we deduce that $\frac{\overline{R}}{R}=\frac{{R}}{\fm}$.
Since $\depth(\fm)=1$, we deduce that	$\mathfrak{C}_{R}$ is not maximal  Cohen-Macaulay.

%iii) The Cohen-Macaulay assumption of $\overline{R}$ is needed. 
\end{example}
The following completes the proof of Theorem C) from introduction.
\begin{observation}\label{bb}Let $R$ be  a 2-dimensional analytically unramified ring satisfying $(R_1)$. Then $\mathfrak{C}_R=\Ann(H^1_\fm(R))$.
\end{observation}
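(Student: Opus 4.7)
The plan is to identify $H^1_\fm(R)$ explicitly with $\overline{R}/R$ via the long exact sequence of local cohomology attached to $0\to R\to \overline{R}\to \overline{R}/R\to 0$, and then simply read off the annihilator.

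First, I would check that $\overline{R}/R$ has finite length and thus equals its own zeroth local cohomology. The analytically unramified hypothesis makes $\overline{R}$ a finite $R$-module, so localization commutes with normalization: $(\overline{R})_\fp=\overline{R_\fp}$ for every $\fp\in\Spec(R)$. The $(R_1)$ hypothesis then forces $R_\fp=\overline{R_\fp}$ at every height-one prime (regular implies normal), and the analogous equality at minimal primes is automatic since analytically unramified rings are reduced, hence localize to fields at minimal primes. Consequently $\Supp(\overline{R}/R)\subseteq\{\fm\}$, which yields $H^0_\fm(\overline{R}/R)=\overline{R}/R$.

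Next, I would establish the vanishings $H^0_\fm(\overline{R})=H^1_\fm(\overline{R})=0$. Because $\overline{R}$ is module-finite over $R$, its local cohomology with respect to $\fm$ coincides with its local cohomology with respect to $\fm\overline{R}$, which is primary to the Jacobson radical of $\overline{R}$. Being two-dimensional and normal, $\overline{R}$ is $(S_2)$ by Serre's criterion (Fact \ref{sc}), hence Cohen-Macaulay of depth $2$ at each maximal ideal, so both $H^0$ and $H^1$ vanish. Substituting these zeros into the long exact sequence produces the isomorphism $H^1_\fm(R)\cong H^0_\fm(\overline{R}/R)=\overline{R}/R$, and taking annihilators gives $\Ann_R(H^1_\fm(R))=\Ann_R(\overline{R}/R)=\mathfrak{C}_R$ by definition of the conductor.

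There is essentially no serious obstacle here; the core computation already surfaced inside the proofs of Observation \ref{ec}(ii) and Fact \ref{ref}(iii), and the present observation simply bundles that calculation into a clean statement. The only point I would articulate carefully is the transfer of depth information from the $\overline{R}$-module structure of $\overline{R}$ to its $R$-module structure, which is legitimate precisely because $\overline{R}$ is a finite $R$-module; this is exactly where the analytically unramified hypothesis does its work.
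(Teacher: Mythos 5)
Your argument is correct and follows essentially the same route as the paper's proof: the long exact sequence of local cohomology attached to $0\to R\to\overline{R}\to\overline{R}/R\to 0$, the vanishing $H^0_\fm(\overline{R})=H^1_\fm(\overline{R})=0$, and the identification $H^0_\fm(\overline{R}/R)=\overline{R}/R$ forced by the $(R_1)$ hypothesis. The only difference is that you supply the justifications the paper leaves implicit --- the depth-two bound on $\overline{R}$ via Serre's criterion and the transfer of depth along the finite extension $R\subseteq\overline{R}$ --- which is a welcome addition rather than a departure.
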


\begin{proof}
	 From  $0\to R\to  \overline{R}\to \frac{\overline{R}}{R} \to 0$ we deduce $0=H^0_\fm(\overline{R})\to H^0_\fm( \frac{\overline{R}}{R})\to H^1_\fm(R)\to H^1_\fm(\overline{R})=0.$ According to the $(R_1)$-condition, we see the ring is normal over the punctured spectrum. From this $H^0_\fm( \frac{\overline{R}}{R})=\frac{\overline{R}}{R}$.
	 Consequently, $H^1_\fm(R)=H^0_\fm( \frac{\overline{R}}{R})=\frac{\overline{R}}{R}$, so $\mathfrak{C}_R=\Ann(H^1_\fm(R))$ as claimed.
\end{proof}

Let $M$ be an $R$-module. Vasconselos proved that $M$ is projective provided
$M\otimes_R\overline{R}$ is $\overline{R}$-projective and that $M\otimes_RR/\mathfrak{C}$ is $R/\mathfrak{C}$-projective. Two natural questions arises:
\begin{enumerate}
	\item[$1)$]:  Suppose $\Gdim_{\overline{R}}(M\otimes_R\overline{R})=0$ and that $\Gdim_{R/ \mathfrak{C}}(M\otimes_R\frac{R}{ \mathfrak{C}})=0$.
Is  $\Gdim_R(M)=0$?
		\item[$2)$]: Suppose $\pd_{\overline{R}}(M\otimes_R\overline{R})<\infty$  and that $\pd_{R/ \mathfrak{C}}(M\otimes_R\frac{R}{ \mathfrak{C}})<\infty$.
	Is 	$\pd_R(M)<\infty$?
\end{enumerate}
We show non of these true:
\begin{example}\label{3.9b}
 \begin{enumerate} 
	\item[$1)$]: Let $R:=k[[s^3,s^4,s^5]]$ and $M:=\fm=\mathfrak{C}$. Then $\Gdim_R(M)\neq0$ but
$\Gdim_{\overline{R}}(M\otimes_R\overline{R})=\Gdim_{R/ \mathfrak{C}}(M\otimes_R\frac{R}{ \mathfrak{C}})=0$.
\item[$2)$]:	Let $R:=k[[s^2,s^3,st,t]]$ and $M:=k$. Then: $\pd_R(M)=\infty$ but
$\pd_{\overline{R}}(M\otimes_R\overline{R})<\infty$  and that $\pd_{R/ \mathfrak{C}}(M\otimes_R\frac{R}{ \mathfrak{C}})<\infty$.
\end{enumerate} 
\end{example}
\section{Ikeda's problem}
	
\begin{conjecture}\label{41}
Suppose $R$ is analytically unramified. Then $\mathfrak{C}\nsubseteq \underline{x}R$ where $\underline{x}$ is a regular sequence.
\end{conjecture}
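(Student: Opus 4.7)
The plan is to attack Conjecture \ref{41} by reducing to the established cases of Theorem A via extension to a system of parameters, completion, and induction on dimension. Since any regular sequence $\underline{x}$ in a Noetherian local ring extends to a system of parameters $\underline{y}$ with $\underline{x}R\subseteq\underline{y}R$, it suffices to establish $\mathfrak{C}\nsubseteq\underline{y}R$ for each SOP $\underline{y}$. I would then pass to the completion $\widehat{R}$: analytic unramifiedness is preserved, $\overline{R}$ is module-finite over $R$, $\mathfrak{C}$ commutes with completion by finiteness of $\overline{R}$, and an SOP lifts to an SOP. One may further assume $R$ is not normal, since otherwise $\mathfrak{C}=R$ is not contained in any proper ideal.

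My first attack would induct on $\dim R$. For $\dim R=1$ a non-zerodivisor is itself an SOP and $\depth R=1$, so Theorem A(ii) applies. For $\dim R=2$ I would leverage Theorem C(iv): in the $(R_1)$ setting $\mathfrak{C}_R=\Ann(H^1_\fm(R))$, and an inclusion $\mathfrak{C}\subseteq\underline{x}R$ would translate into a strong constraint on $H^1_\fm(R)\cong\overline{R}/R$ that, together with the identity $\mathfrak{C}\cdot\overline{R}=\mathfrak{C}$, forces $\overline{R}=R$. For higher dimensions I would combine induction on $\depth R$ with Theorem C(iii)'s annihilator inclusion $\mathfrak{C}\subseteq\Ann\Tor^R_+(\overline{R},R/\underline{x}R)$: if $\mathfrak{C}\subseteq\underline{x}R$, then the natural map $\mathfrak{C}\otimes_R R/\underline{x}R\to R/\underline{x}R$ vanishes, so the Koszul spectral sequence for $\underline{x}$ against $\overline{R}$ should enforce cancellations which, combined with $\mathfrak{C}\overline{R}=\mathfrak{C}$ and Nakayama, collapse $\overline{R}/R$ to zero.

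The main obstacle I anticipate is the intermediate range $1<\depth R<\dim R$ outside the four settings of Theorem A. In the quasi-Gorenstein case, parameter ideals admit socle-based duality (case i); in the depth-one and two-dimensional Buchsbaum cases the identification $H^1_\fm(R)\cong\overline{R}/R$ is the key structural input (cases ii, iii); and $\fm=\mathfrak{C}+yR$ furnishes a direct algebraic handle (case iv). Outside these regimes neither the canonical-module duality nor the cohomological identification is available, and the Koszul/Tor chase above produces only inclusions rather than the equalities needed to apply Nakayama. A new ingredient is likely required here, perhaps a parameter test element adapted to the non-Cohen-Macaulay setting, or a refined analysis of $(\underline{x}R:_R\mathfrak{C})/\underline{x}R$ exhibiting a nonzero socle-like element whose existence would directly contradict $\mathfrak{C}\subseteq\underline{x}R$. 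Producing such an element uniformly across the intermediate depth range is, I suspect, the genuine content of the conjecture and may require invariants beyond those deployed in Theorem A.
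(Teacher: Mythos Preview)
The statement you are attempting is labeled \textbf{Conjecture}~\ref{41} in the paper and is \emph{not} proved there; the paper establishes only the special cases listed in Theorem~A (quasi-Gorenstein, $\depth R=1$, two-dimensional Buchsbaum with $(R_1)$, and $\fm=\mathfrak{C}+yR$). There is therefore no ``paper's own proof'' to compare against, and your proposal should be read as an outline toward an open problem rather than as a candidate proof of a known theorem.

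As an outline it has concrete gaps beyond the one you yourself flag. First, your $\dim R=2$ step invokes Theorem~C(iv), but that identification $\mathfrak{C}=\Ann H^1_{\fm}(R)$ requires the $(R_1)$ hypothesis; without it $\overline{R}/R$ need not have finite length and is not isomorphic to $H^1_{\fm}(R)$, so the argument does not cover the general two-dimensional case. Second, Theorem~C(iii) is stated for Cohen--Macaulay $R$ and a \emph{full} parameter sequence; you cannot appeal to it in an ``induction on $\depth R$'' for non-Cohen--Macaulay rings. Third, the passage ``the Koszul spectral sequence for $\underline{x}$ against $\overline{R}$ should enforce cancellations which \ldots\ collapse $\overline{R}/R$ to zero'' is not an argument: you have only the inclusion $\mathfrak{C}\subseteq\Ann\Tor^R_+(\overline{R},R/\underline{x}R)$, and nothing in the proposal produces the reverse direction or a Nakayama-ready equality. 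You correctly diagnose that the intermediate depth range is the real obstruction; the paper's method in the quasi-Gorenstein case (Observation~\ref{qc}) rests on the canonical element theorem and the injectivity of $H^d_{\fm}(R)$, and no analogue of that mechanism is currently available in the generality of Conjecture~\ref{41}.
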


We only use the following weakened property of quasi-Gorenstein:

\begin{lemma}
	Let $(R,\fm)$ be d-dimensional. If $\id_R(H^d_\fm(R))<\infty,$ then $H^d_\fm(R)$ is injective.
\end{lemma}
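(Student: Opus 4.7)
My plan is to reduce to the complete case and apply Matlis duality, converting the hypothesis on injective dimension of the Artinian module $H^d_\fm(R)$ into a statement about the projective dimension of its Matlis dual, which is Grothendieck's canonical module of $R$.

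First, $H^d_\fm(R)$ is Artinian (by a theorem of Matlis) with support only at $\fm$, so its injective dimension is preserved under $\fm$-adic completion, and $H^d_\fm(R)=H^d_{\hat{\fm}}(\hat R)$ canonically; so without loss of generality I may assume $R$ is complete. Set $E:=E_R(R/\fm)$, $(-)^\vee:=\Hom_R(-,E)$, $M:=H^d_\fm(R)$ and $K:=M^\vee$. By Matlis duality $K$ is a finitely generated $R$-module with $K^\vee=M$. Because $\Supp(M)=\{\fm\}$ and $M$ is Artinian, every term of the minimal injective resolution $0\to M\to I^0\to\cdots\to I^n\to 0$ (with $n:=\id_R M$) has the form $I^j=E^{b_j}$ where $b_j=\dim_k\Ext^j_R(k,M)$ is finite. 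Applying the exact functor $(-)^\vee$ produces the finite free resolution
\[
0\to R^{b_n}\to R^{b_{n-1}}\to\cdots\to R^{b_0}\to K\to 0,
\]
so $\pd_R K\leq n<\infty$.

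It then suffices to show that $K$ is actually free: for then $M\cong K^\vee\cong E^{b_0}$ is a finite direct sum of copies of $E$ and hence injective, as desired. This is the main obstacle. The plan here is to exploit that $K=H^d_\fm(R)^\vee$ is Grothendieck's canonical module, so $\dim K=d$ and $K$ is unmixed of top dimension, combined with the Auslander--Buchsbaum equality
\[
\pd_R K+\depth_R K=\depth R.
\]
Since the free resolution of $K$ above has length $n$ and its unique nonzero homology $K$ has full dimension $d=\dim R$, the new intersection theorem of Peskine--Szpiro--Hochster--Roberts, together with the fact that $K$ satisfies Serre's $(S_2)$ (known for canonical modules), should force $\depth R\geq d$. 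Consequently $R$ is Cohen--Macaulay, $K$ is maximal Cohen--Macaulay, and the Auslander--Buchsbaum formula yields $\pd_R K=0$, i.e.\ $K$ is free. The hardest part, and where most of the real work lies, is precisely verifying that finite projective dimension of the canonical module forces $R$ to be Cohen--Macaulay; everything else is formal.
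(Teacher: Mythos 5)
Your opening reduction is correct and is almost certainly what the author's one-word proof (``Easy'') has in mind: after passing to the completion, the minimal injective resolution of the Artinian module $M:=H^d_\fm(R)$ consists of finite direct sums of $E:=E_R(k)$, Matlis duality turns it into a finite free resolution of the canonical module $K:=M^\vee$, and the lemma becomes ``$\pd_R K<\infty$ implies $K$ is free.'' The problem is with how you propose to finish. Your stated target is that the hypothesis forces $\depth R\geq d$, i.e.\ that $R$ is Cohen--Macaulay, and this is \emph{false}: any non-Cohen--Macaulay quasi-Gorenstein ring (these exist, e.g.\ in dimension $\geq 4$) has $H^d_\fm(R)\cong E$, hence $\id_R(H^d_\fm(R))=0<\infty$, yet is not Cohen--Macaulay. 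So no application of the new intersection theorem can deliver the step you defer; moreover the tool does not even engage with your complex, since the new intersection theorem requires the homology of the free complex to have finite length, whereas your complex resolves $K$, a module of full dimension $d$ (and the inequality $\dim N\leq\pd K+\dim(K\otimes_R N)$ is vacuous here because $\Supp K$ contains every top-dimensional component). The correct target is only $\pd_R K=0$, which is strictly weaker than Cohen--Macaulayness of $R$.

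Aimed at the right target, the argument closes easily in low dimension and is a genuine theorem in general. By Auslander--Buchsbaum, $\pd_R K=\depth R-\depth_R K$, and one knows (Aoyama) that $\depth_R K\geq\min\{2,\dim R\}$ and $\dim K=d$. For $d\leq 2$ this gives $\depth_R K=d\geq\depth R$, hence $\pd_R K\leq 0$ and $K$ is free, so $M\cong E^{b_0}$ is injective; for $d=3$ a one-line case analysis works ($\depth_R K=2$ and $\pd_R K=1$ would force $\depth R=3$, making $R$ Cohen--Macaulay and $K$ maximal Cohen--Macaulay, a contradiction). For general $d$ the statement ``$\pd_R K_R<\infty$ implies $K_R$ is free'' is the content of Aoyama's work on the depth and projective dimension of the canonical module, and it requires an induction on dimension with control of the localizations of $K_R$ --- none of which appears in your proposal. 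Since the paper itself offers no written argument, I cannot say you diverge from it; but as it stands your submission is a correct formal reduction followed by an unproved (and, in the form you state it, unprovable) key step.
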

\begin{proof}
Easy.
\end{proof}

The following drop some conditions  from \cite{i}, for example the Cohen-Macaulay condition:
\begin{lemma}\label{injh}
Suppose $R$ is analytically unramified and quasi-Gorenstein. Then $\Hom_R(\mathfrak{C},H^d_\fm(R))=H^d_\fm(\overline{R})$, where $d:=\dim R$.
\end{lemma}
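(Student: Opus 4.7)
The plan is to reduce the statement to a routine Matlis-duality computation. First I would pass to the $\fm$-adic completion, since $R$ being analytically unramified gives $\overline{\widehat R} = \widehat{\overline R}$, while the conductor, local cohomology at $\fm$, and $\Hom$ against a finite module all commute with completion, and the quasi-Gorenstein property is inherited by $\widehat R$. So I may assume $R$ is complete. Under quasi-Gorenstein (combined with the preceding lemma asserting injectivity), $H^d_\fm(R)$ is injective, and being an essential extension of $R/\fm$ it is the injective hull $E := E_R(R/\fm)$. Consequently $\Hom_R(-, H^d_\fm(R))$ coincides with Matlis duality, which I denote $(-)^\vee$.

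Next I would compute $H^d_\fm(\overline R)^\vee$ directly from right-exactness of $H^d_\fm(-)$, thereby bypassing local duality (which is somewhat delicate without a Cohen--Macaulay hypothesis). Choose a finite free presentation $R^m \xrightarrow{\phi} R^n \to \overline R \to 0$, available because $\overline R$ is a finite $R$-module. Applying the right-exact functor $H^d_\fm(-)$ gives $E^m \to E^n \to H^d_\fm(\overline R) \to 0$. Applying the exact contravariant functor $(-)^\vee$ and using $E^\vee = R$ (here completeness is essential), the induced map is the transpose $\phi^t\colon R^n \to R^m$, producing
$$0 \to H^d_\fm(\overline R)^\vee \to R^n \xrightarrow{\phi^t} R^m.$$
The kernel of $\phi^t$ is exactly $\Hom_R(\overline R, R) = \mathfrak{C}$, so $H^d_\fm(\overline R)^\vee \cong \mathfrak{C}$.

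Finally I would invoke Matlis reflexivity: since $H^d_\fm(\overline R)$ is artinian as the top local cohomology of a finitely generated module over a local ring, it is Matlis reflexive, and dualizing once more yields
$$H^d_\fm(\overline R) \cong H^d_\fm(\overline R)^{\vee\vee} \cong \mathfrak{C}^\vee = \Hom_R(\mathfrak{C}, H^d_\fm(R)),$$
which is the desired equality. The main obstacle is securing the identification $H^d_\fm(R) = E$ in the full quasi-Gorenstein (possibly non-Cohen--Macaulay) setting; the preceding lemma supplies injectivity, and one must upgrade this to the indecomposable case. Once that identification is in place, the remainder is a formal consequence of right-exactness of $H^d_\fm$ combined with Matlis duality over the complete ring.
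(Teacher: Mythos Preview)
Your argument is correct, but the paper's route is shorter and avoids the detour through completion and Matlis reflexivity. The paper uses only the injectivity of $H^d_\fm(R)$ (supplied by the preceding lemma) together with the standard natural isomorphism
\[
\Hom_R\bigl(\Hom_R(M,R),\,I\bigr)\;\cong\;M\otimes_R I
\]
valid for any finitely presented $M$ and injective $I$; taking $M=\overline R$ and $I=H^d_\fm(R)$ gives $\Hom_R(\mathfrak{C},H^d_\fm(R))\cong \overline R\otimes_R H^d_\fm(R)$, and then right-exactness of $H^d_\fm(-)$ (as $d$ is the cohomological dimension) identifies the tensor product with $H^d_\fm(\overline R)$. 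Your proof is really the same computation unwound through a free presentation, but because you phrase it as Matlis duality you are forced to complete so that $E^\vee=R$ and so that $H^d_\fm(\overline R)$ is Matlis reflexive, and you must also upgrade ``injective'' to ``equals $E$''. None of that is needed: the displayed isomorphism above holds for \emph{any} injective $I$, not just the indecomposable one, so the paper's argument works over $R$ itself without ever invoking $H^d_\fm(R)=E$ or completeness.
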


\begin{proof}
Since $\overline{R}$ is finitely generated and $H^d_\fm(R)$ is injective, we have the following functorial isomorphisms:$$
\begin{array}{ll}
\Hom_R(\mathfrak{C},H^d_\fm(R))&=\Hom_R(\Hom_R(\overline{R},R),H^d_\fm(R))\\&\cong \overline{R}\otimes\Hom_R(R,H^d_\fm(R))\\&=\overline{R}\otimes H^d_\fm({R})
\\&\stackrel{(\ast)}\cong H^d_\fm(\overline{R}),
\end{array}$$where $(\ast)$ follows because $d$ is the cohomological dimension.
\end{proof}
The following extends and simplifies a result of Ikeda:
\begin{observation}\label{qc}
 Suppose $R$ is quasi-Gorenstein. Then $\mathfrak{C}\nsubseteq \underline{x}R$ where $\underline{x}$  is a system of parameter. 
\end{observation}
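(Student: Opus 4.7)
The plan is to assume for contradiction that $\mathfrak{C} \subseteq \underline{x}R$ for some system of parameters $\underline{x} = (x_1,\ldots,x_d)$ and to combine Matlis duality with Lemma~\ref{injh} to produce an element that must both vanish and not vanish in $H^d_\fm(\overline{R})$. First, I pass to the $\fm$-adic completion of $R$: the quasi-Gorenstein and analytically unramified hypotheses, and the containment $\mathfrak{C} \subseteq \underline{x}R$, are all preserved. Under quasi-Gorenstein, $E := H^d_\fm(R)$ is the injective hull $E_R(R/\fm)$, so $(-)^\vee := \Hom_R(-,E)$ is Matlis duality.

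Dualize $0 \to \mathfrak{C} \to R \to R/\mathfrak{C} \to 0$ by $(-)^\vee$: injectivity of $E$ gives exactness, and Lemma~\ref{injh} identifies $\Hom_R(\mathfrak{C},E)$ with $H^d_\fm(\overline{R})$, yielding
\[
0 \to (R/\mathfrak{C})^\vee \to E \xrightarrow{\rho} H^d_\fm(\overline{R}) \to 0,
\]
with $\rho$ the natural map induced by $R \hookrightarrow \overline{R}$ on top local cohomology and $\Ker(\rho) = \Ann_E(\mathfrak{C})$. Next I consider the class $\xi \in E = \varinjlim_n R/(x_1^n,\ldots,x_d^n)R$ represented by $[1+\underline{x}R]$; since $x_i \cdot [1+\underline{x}R] = [x_i+\underline{x}R] = 0$ already at stage one, one has $\underline{x}\xi = 0$. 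The hypothesis $\mathfrak{C} \subseteq \underline{x}R$ then forces $\mathfrak{C}\xi = 0$, so $\xi \in \Ker(\rho)$ and $\rho(\xi) = 0$ in $H^d_\fm(\overline{R})$. On the other hand, by naturality of the direct-limit description, $\rho(\xi)$ coincides with the analogous class $\overline{\xi} := [1+\underline{x}\overline{R}] \in \varinjlim_n \overline{R}/(x_1^n,\ldots,x_d^n)\overline{R} = H^d_\fm(\overline{R})$; the sought-after contradiction comes from showing $\overline{\xi} \neq 0$.

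The main obstacle is precisely verifying $\overline{\xi} \neq 0$ in $H^d_\fm(\overline{R})$ without assuming $\overline{R}$ is Cohen-Macaulay (which quasi-Gorenstein of $R$ does not force). When $\overline{R}$ is Cohen-Macaulay---automatic in dimension one, and in Ikeda's original integral-domain argument in the relevant cases---$\underline{x}$ is a regular sequence on $\overline{R}$, whence $(x_1\cdots x_d)^{n-1} \notin (x_1^n,\ldots,x_d^n)\overline{R}$ at every stage and $\overline{\xi} \neq 0$. In the general quasi-Gorenstein setting, where $\overline{R}$ is only known to be complete semilocal, normal, and $S_2$, I plan to decompose $H^d_\fm(\overline{R}) = \bigoplus_\fn H^d_\fn(\overline{R}_\fn)$ over the finitely many maximal ideals $\fn$ of $\overline{R}$, invoke Grothendieck's non-vanishing theorem on a $d$-dimensional component, and use the identification $\omega_{\overline{R}} = \Hom_R(\overline{R}, R) = \mathfrak{C}$ (which holds precisely because $R$ is quasi-Gorenstein) to rephrase the non-vanishing as a statement that the canonical module of a component $\overline{R}_\fn$ is not absorbed by the parameter ideal $\underline{x}\overline{R}_\fn$.
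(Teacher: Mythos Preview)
Your setup is essentially the same as the paper's: both arguments reduce, via Lemma~\ref{injh}, to showing that the class $\overline{\xi}=[1+\underline{x}\overline{R}]$ is nonzero in $H^d_\fm(\overline{R})$, and then derive a contradiction from $\mathfrak{C}\subseteq\underline{x}R$ by observing that this forces the image of the corresponding class in $H^d_\fm(R)$ to die under the natural map. Your exact-sequence formulation (dualizing $0\to\mathfrak{C}\to R\to R/\mathfrak{C}\to 0$) and the paper's evaluation-diagram formulation are two packagings of the same computation.

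The genuine gap is precisely the step you flag as the ``main obstacle'': the non-vanishing of $\overline{\xi}$. For an arbitrary $d$-dimensional local (or semilocal) ring $S$ with system of parameters $\underline{x}$, the assertion that the image of $1+\underline{x}S$ in $H^d_\fm(S)$ is nonzero is \emph{exactly} the monomial conjecture / canonical element conjecture, which is a deep theorem (Hochster in equicharacteristic, Andr\'e in mixed characteristic); see \cite[Section 9.3]{BH}. The paper simply invokes this theorem for $S=\overline{R}$ and is done. Your proposed workaround does not succeed: Grothendieck's non-vanishing only gives $H^d_\fn(\overline{R}_\fn)\neq 0$, which is far weaker than the specific class $[1+\underline{x}\overline{R}_\fn]$ being nonzero, and the vague reformulation in terms of $\omega_{\overline{R}_\fn}=\mathfrak{C}_\fn$ ``not being absorbed by $\underline{x}$'' does not recover that statement either---indeed, the canonical element conjecture is known to be strictly stronger than any such soft consequence of the existence of a canonical module. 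Once you replace that paragraph by a citation of the canonical element theorem (applied to a $d$-dimensional local factor of the semilocal ring $\overline{R}$), your proof is complete and coincides with the paper's.
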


\begin{proof}
Suppose $\mathfrak{C}\subseteq \underline{x}R$ for some     system of parameter $\underline{x}$. We  extend it to a sequence of length $d$. By canonical element conjecture  which is a theorem, see \cite[Section 9.3]{BH}, there is a natural  map $\rho_{\overline{R}}:\frac{\overline{R}}{\underline{x}\overline{R}}\to H^d_\fm(\overline{R})$ with the property that $\alpha:=\rho_{\overline{R}}(1+\underline{x}\overline{R})\neq 0 $. Now, look at $\alpha\in H^d_\fm(\overline{R}) \stackrel{\ref{injh}}=\Hom_R(\mathfrak{C},H^d_\fm(R))$.
In particular, there is a nonzero $f:\mathfrak{C}\to H^d_\fm(R)$ so that $\alpha=\phi(f)$ where $\phi:H^d_\fm(\overline{R})\stackrel{\cong}\lo\Hom_R(\mathfrak{C},H^d_\fm(R))$. Also, the assignment $m\otimes (M\stackrel{g}\lo N)\mapsto g(m)$ gives the evaluation map
$\varPhi_{M,N}:M\otimes\Hom(M,N)\lo N$. Here, we set
\begin{enumerate}
	\item[i)]   $\varPhi_1:=\varPhi_{\mathfrak{C},H^d_\fm(R)}:\mathfrak{C}\otimes\Hom(\mathfrak{C},H^d_\fm(R))\lo H^d_\fm(R)$, 	\item[ii)] $\varPhi_2=\varPhi_{\overline{R}, R}\otimes_R\id_{\frac{{R}}{\underline{x}{R}}}:[\Hom(\overline{R},R)\otimes\overline{R}\lo R]\otimes_R\frac{{R}}{\underline{x}{R}}$.
\end{enumerate}	
We have the following commutative diagram$$\xymatrix{
	& \mathfrak{C}\otimes_RH^d_\fm(\overline{R})\ar[r]^{\varPhi_1} &H^d_\fm(R)	\\   & \mathfrak{C}\otimes_R \frac{\overline{R}}{\underline{x}\overline{R}}\ar[u]^{\id_\mathfrak{C}\otimes \rho_{\overline{R}}}\ar[r]^{\varPhi_2}  &  \frac{{R}}{\underline{x}{R}}\ar[u]_{\rho_{{R}}}
	&&&}$$Recall that $f:\mathfrak{C}\to H^d_\fm(R)$ so that $\alpha=\phi(f)$  is nonzero. Take $y\in \mathfrak{C}$ so that $f(y)\neq 0$. Since $\mathfrak{C}\overline{R}\subseteq R$ we deduce that $ry\in R$ and so  $\varPhi_2(y\otimes(r+\underline{x}\overline{R}))=ry+\underline{x}R\in\frac{{R}}{\underline{x}{R}}  $. Now, recall that $y\in \mathfrak{C}\subseteq \underline{x}R$. This gives
$$ \rho_{ R } \circ\varPhi_2  ((y\otimes(1+\underline{x}\overline{R}))=\rho_R(y+\underline{x} R)=0.$$
But,
$\varPhi_1\circ(\id_\mathfrak{C}\otimes \rho_{\overline{R}})((y\otimes(1+\underline{x}\overline{R}))=\varPhi_1(y\otimes\alpha)=\varPhi_1(y\otimes\phi(f))=f(y)\neq0.$\end{proof}
The following removes dimension restriction from \cite[12.4]{HS} and \cite{i}.

\begin{observation}
 Suppose $R$ is analytically unramified. Then  $\mathfrak{C}\nsubseteq {x}R$ where ${x}\in \fm$.  
\end{observation}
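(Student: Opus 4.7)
The plan is to derive a contradiction from the assumption $\mathfrak{C}\subseteq xR$ by a Nakayama-style collapse, exploiting the structural fact that $\mathfrak{C}$ is not merely an ideal of $R$ but also an ideal of $\overline{R}$. Indeed, unwinding $\mathfrak{C}=\{r\in R:r\overline{R}\subseteq R\}$ shows that $\mathfrak{C}$ is the largest $\overline{R}$-ideal contained in $R$: any $\overline{R}$-submodule of $R$ sits inside $\mathfrak{C}$. This maximality property is the lever that lets ``division by $x$'' push the conductor back inside itself.

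First I would reduce to the case where $x$ is a non-zero-divisor. Since $R$ is analytically unramified, $\overline{R}$ is a finite $R$-module, and by \cite[Ex.~2.11]{HS} the conductor $\mathfrak{C}$ contains a regular element $c_0$. The hypothesis gives $c_0=xr_0$ for some $r_0\in R$; then any $s\in R$ with $xs=0$ would satisfy $c_0 s=r_0(xs)=0$, forcing $s=0$. Hence $x$ is a non-zero-divisor, and one can formally invert $x$ inside the total ring of fractions of $R$.

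Next, set $\mathfrak{C}':=x^{-1}\mathfrak{C}$ inside the total ring of fractions. The hypothesis $\mathfrak{C}\subseteq xR$ is exactly $\mathfrak{C}'\subseteq R$, and I would check that $\mathfrak{C}'$ inherits $\overline{R}$-stability from $\mathfrak{C}$: for $\bar s\in\overline{R}$ and $c\in\mathfrak{C}$, the element $\bar s\cdot(c/x)=(\bar sc)/x$ again lies in $\mathfrak{C}'$, since $\bar sc\in\mathfrak{C}$ by $\overline{R}$-stability of $\mathfrak{C}$. By the maximality property from the first paragraph, $\mathfrak{C}'\subseteq\mathfrak{C}$, which unwinds to $\mathfrak{C}\subseteq x\mathfrak{C}$, and therefore $\mathfrak{C}=x\mathfrak{C}$. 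Since $\mathfrak{C}$ is a finitely generated $R$-module and $x\in\fm$, Nakayama's lemma yields $\mathfrak{C}=0$, contradicting the existence of $c_0$.

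The only subtle step is the reduction to $x$ being a non-zero-divisor, since $R$ is only analytically unramified (hence reduced) rather than a domain; once that is past, the argument is purely formal and uses neither the dimension nor the depth of $R$. This is what lets the result drop the dimensional hypothesis present in \cite[12.4]{HS} and in \cite{i}, and it meshes conceptually with Observation \ref{qc}, where the conductor was forced out of a full parameter ideal via a different (canonical element) mechanism.
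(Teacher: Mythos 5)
Your proposal is correct and follows essentially the same route as the paper: both arguments first use \cite[Ex.~2.11]{HS} to get a regular element in $\mathfrak{C}$ and deduce that $x$ is a non-zerodivisor, then divide $\mathfrak{C}$ by $x$ (your $\mathfrak{C}'=x^{-1}\mathfrak{C}$ is exactly the paper's ideal $J$ with $\mathfrak{C}=xJ$), observe that the result is a common ideal of $R$ and $\overline{R}$ and hence lands back inside $\mathfrak{C}$, and finish with Nakayama. The only differences are cosmetic: you phrase the $\overline{R}$-stability via the maximality of $\mathfrak{C}$ among $\overline{R}$-ideals contained in $R$ and apply Nakayama to $\mathfrak{C}$ itself rather than to $J$, and your direct verification that $x$ is a non-zerodivisor is slightly cleaner than the paper's grade argument.
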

\begin{proof}
	Suppose  $\mathfrak{C}\subseteq {x}R$ for some ${x}\in \fm$.  Since $R$ is analytically unramified, it is reduced. It is easy to see $\overline{R}/R$ is analytically unramified, and so reduced. Due to \cite[Ex. 2.11]{HS} we know $\mathfrak{C}$ has a regular element.
	Thus, $\grade({x}R,R)=1$. Consequently, $x$ is regular. Suppose $\mathfrak{C}=(a_1,\ldots,a_n)$ and let $b_i$ so that $a_i=b_ix$. Let $J:=(b_1,\ldots,b_n)$. Then $\mathfrak{C}=xJ$. Thus, $J\overline{R}=(1/x)\mathfrak{C}\overline{R}=(1/x)\mathfrak{C}=J$. In other words, $J$ is a common ideal of $\{R,\overline{R}\}$. So, $J\subseteq \mathfrak{C}$. Therefore, $J=xJ$. By Nakayama's lemma $J=0$. This is a contradiction, since $\mathfrak{C}\neq 0$.  
\end{proof}
\begin{corollary}
	Suppose $\depth(R)=1$. Then  $\mathfrak{C}\nsubseteq \underline{x}R$ where $\underline{x}$  is a regular sequence. 
\end{corollary}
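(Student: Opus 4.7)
The plan is to derive this as an essentially immediate corollary of the preceding observation. Since $\depth(R)=1$, every regular sequence $\underline{x}$ contained in $\fm$ has length at most $1$, so the statement reduces to two cases according to the length of $\underline{x}$.

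First I would dispense with the trivial case: if $\underline{x}$ is the empty sequence, then $\underline{x}R=0$, while $\mathfrak{C}\neq 0$. The latter is non-trivial but already established — because $R$ is analytically unramified (hence reduced), \cite[Ex.~2.11]{HS} ensures that $\mathfrak{C}$ contains a regular element, so in particular $\mathfrak{C}\not\subseteq 0$.

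For the main case, where $\underline{x}$ consists of a single element $x\in\fm$ (necessarily a non-zero-divisor by the depth hypothesis), I would simply invoke the preceding observation (the principal-ideal version of Ikeda's problem), which asserts that $\mathfrak{C}\nsubseteq xR$ for any $x\in\fm$, under our standing analytically unramified hypothesis. Combining the two cases gives the corollary.

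I do not expect any real obstacle: the substantive content is the preceding observation itself, whose Nakayama-style argument exhibits that otherwise $\mathfrak{C}=xJ$ yields a common ideal $J$ of $R$ and $\overline{R}$ satisfying $J=xJ$. Once that is in hand, the depth restriction $\depth(R)=1$ is used only to force the length of $\underline{x}$ to be at most one, after which nothing new needs to be proved.
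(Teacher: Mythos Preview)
Your proposal is correct and matches the paper's intent: the corollary is stated without proof precisely because it is an immediate consequence of the preceding observation, and you have supplied exactly the missing one-line reduction (that $\depth(R)=1$ forces any regular sequence in $\fm$ to have length at most one). Your handling of the empty-sequence case via \cite[Ex.~2.11]{HS} is a nice bit of care that the paper leaves implicit.
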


\begin{example}
	Let $R:=k[[s^2,s^3,st,t]]$. Then  $\mathfrak{C}\nsubseteq \underline{x}R$ where $\underline{x}$  is a parameter sequence.
\end{example}
\begin{proof}
Thanks to Example \ref{3.9} we know $(t,s^2,s^3,st)=\mathfrak{C}$, and the equality follows as the ring is not normal. Suppose on the way of contradiction that 	 $\mathfrak{C}\subseteq \underline{x}R$.
	Then $\fm\subseteq \underline{x}R$. This implies the ring is regular, a contradiction.
\end{proof}
The previous example extends as follows:
\begin{observation}
Suppose $R$ is a two-dimensional Buchsbaum ring which is $(R_1)$. Then $\mathfrak{C}\nsubseteq \underline{x}R$ where $\underline{x}$  is a system of parameter.
\end{observation}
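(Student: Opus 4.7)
The plan is to show that under the hypotheses the conductor must equal the maximal ideal, and then derive a contradiction from $\fm\subseteq \underline{x}R$. First, if $R$ is already normal there is nothing to prove since $\mathfrak{C}_R=R\not\subseteq\underline{x}R$, so I assume $R$ is not normal.

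Next I would compute $\mathfrak{C}_R$ via local cohomology, following exactly the pattern used in the proof of Observation~\ref{ec}(ii) and Observation~\ref{bb}. Because $R$ is $(R_1)$ (and analytically unramified), $\overline{R}_{\fp}=R_{\fp}$ for every $\fp\neq\fm$, so $\overline{R}/R$ has finite length and satisfies $H^0_{\fm}(\overline{R}/R)=\overline{R}/R$. Since $\overline{R}$ is a 2-dimensional normal (hence Cohen--Macaulay by Serre's criterion) semilocal ring, $H^0_{\fm}(\overline{R})=H^1_{\fm}(\overline{R})=0$. The long exact sequence associated to $0\to R\to\overline{R}\to\overline{R}/R\to 0$ then yields
$$H^1_{\fm}(R)\ \cong\ H^0_{\fm}(\overline{R}/R)\ =\ \overline{R}/R,$$
so $\mathfrak{C}_R=\Ann_R(\overline{R}/R)=\Ann_R(H^1_{\fm}(R))$.

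The Buchsbaum hypothesis gives $\fm H^1_{\fm}(R)=0$, so $\fm\subseteq\Ann_R(H^1_{\fm}(R))=\mathfrak{C}_R$. Since $R$ is not normal, $\mathfrak{C}_R\neq R$, and hence $\mathfrak{C}_R=\fm$. Now assume toward contradiction that $\mathfrak{C}_R\subseteq\underline{x}R$ for some system of parameters $\underline{x}=(x_1,x_2)$. Then $\fm=\mathfrak{C}_R\subseteq\underline{x}R\subseteq\fm$, forcing $\fm=\underline{x}R$; so $\fm$ is generated by two elements in a 2-dimensional local ring, i.e.\ $R$ is regular and in particular normal, contradicting our standing assumption.

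There is no real obstacle here: the computation $\mathfrak{C}_R=\fm$ is forced entirely by the cohomological dictionary already established in Observations~\ref{ec} and \ref{bb}, and once the conductor equals $\fm$ the embedding-dimension argument is immediate. The only point worth double-checking is the vanishing $H^1_{\fm}(\overline{R})=0$, which rests on the standard fact that a 2-dimensional analytically unramified normal (semilocal) ring is Cohen--Macaulay.
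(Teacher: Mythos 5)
Your proposal is correct and follows essentially the same route as the paper: both reduce to the non-normal case, use the $(R_1)$ hypothesis and the sequence $0\to R\to\overline{R}\to\overline{R}/R\to 0$ (as in Observation \ref{bb}) to identify $\mathfrak{C}_R=\Ann_R(H^1_{\fm}(R))$, invoke the Buchsbaum condition $\fm H^1_{\fm}(R)=0$ to conclude $\mathfrak{C}_R=\fm$, and then derive a contradiction from $\fm\subseteq\underline{x}R$ forcing $R$ to be regular. Your write-up is in fact slightly more explicit than the paper's about why $H^1_{\fm}(\overline{R})=0$ and why $\mathfrak{C}_R=\fm$, but the argument is the same.
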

\begin{proof}
Recall from Observation \ref{bb} that $\frac{\overline{R}}{R}=H^1_\fm(R)=\oplus_X k$ where $X$ is an index set. Without loss of generality we may assume that $R$ is not $(S_2)$, as otherwise  $\mathfrak{C}=R$ which is not   subset of $ \underline{x}R$. This implies $X$ is non-empty.
Suppose on the way of contradiction that 	 $\mathfrak{C}\subseteq \underline{x}R$.
Then $\fm\subseteq \underline{x}R$. This implies the ring is regular, and so normal. Thus $R=\mathfrak{C}\subseteq \underline{x}R$, a contradiction.
\end{proof}
\begin{corollary}
	Suppose $R$ is a complete two-dimensional seminormal ring which is $(R_1)$. Then $R$ is  Buchsbaum.
\end{corollary}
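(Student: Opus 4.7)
The plan is to pin down the conductor as the maximal ideal and then deduce the Buchsbaum condition from the Cohen--Macaulayness of $\overline{R}$. Since $R$ is complete and seminormal it is reduced, hence analytically unramified, so the results of the section apply. If $R$ is already normal, Serre's criterion makes it $(S_2)$, whence Cohen--Macaulay (as $\dim R=2$) and a fortiori Buchsbaum; so we may assume $R$ is not normal.

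First I would show that $\mathfrak{C}=\fm$. Under $(R_1)$ one has $\mathfrak{C}_{R_\fp}=R_\fp$ for every $\fp\in\Spec(R)$ of height at most one, so $\mathfrak{C}$ is $\fm$-primary in $R$. Mimicking the seminormality step of Observation~\ref{ec}(iii), seminormality forces $\mathfrak{C}$ to be a radical ideal of $\overline{R}$, and since all its minimal primes in $\overline{R}$ lie over $\fm$ it coincides with the intersection of the maximal ideals of $\overline{R}$ sitting above $\fm$. Contracting back to $R$ via \cite[p.~62]{mat} yields $\mathfrak{C}=\fm$; in particular $\fm\overline{R}\subseteq R$.

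Next, $\overline{R}$ is a finite direct product of normal two-dimensional complete local domains, each Cohen--Macaulay, so $\overline{R}$ itself is Cohen--Macaulay. For any system of parameters $\underline{x}=x_1,x_2$ of $R$, which remains a parameter sequence of $\overline{R}$, the pair is $\overline{R}$-regular. Hence $(x_1\overline{R}):_{\overline{R}}x_2=x_1\overline{R}$, which gives $(x_1R):_R x_2=x_1\overline{R}\cap R$. Using $\fm\overline{R}\subseteq R$ I would compute $\fm\cdot(x_1\overline{R}\cap R)\subseteq x_1\cdot\fm\overline{R}\subseteq x_1R$, so $(x_1):_R x_2\subseteq (x_1):_R\fm$; the reverse inclusion is automatic since $x_2\in\fm$. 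Combined with $(0):_R x_1=0=(0):_R\fm$ (which hold since $R$ is reduced of positive dimension), every system of parameters of $R$ is a weak $R$-sequence, and therefore $R$ is Buchsbaum.

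The hard part is the conductor step: extracting $\mathfrak{C}=\fm$ from seminormality together with $(R_1)$, which requires a careful analysis of how a radical ideal of $\overline{R}$ whose support lies over $\fm$ contracts to $R$. Once the conductor is identified with the maximal ideal, the Buchsbaum property is a short Koszul-type computation inside the Cohen--Macaulay overring $\overline{R}$.
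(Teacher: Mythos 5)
Your identification of the conductor is exactly the paper's: reduce to the non-$(S_2)$ case, use $(R_1)$ to make $\mathfrak{C}$ $\fm$-primary, use seminormality to make $\mathfrak{C}$ a radical ideal of $\overline{R}$ and hence the Jacobson radical $\fm_{\overline{R}}$ of the semilocal ring $\overline{R}$, and contract to get $\mathfrak{C}=\fm$. Where you diverge is the endgame. The paper quotes Observation~\ref{bb} to rewrite $\mathfrak{C}=\Ann(H^1_\fm(R))$, so that $\fm H^1_\fm(R)=0$, and then appeals (implicitly) to the fact that a two-dimensional local ring of positive depth whose first local cohomology has finite length and is killed by $\fm$ is Buchsbaum, i.e.\ quasi-Buchsbaum implies Buchsbaum in this range. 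You instead verify the weak-sequence characterization of Buchsbaumness directly: $(x_1R):_Rx_2\subseteq x_1\overline{R}\cap R$ because $x_1,x_2$ is $\overline{R}$-regular, and $\fm\bigl(x_1\overline{R}\cap R\bigr)\subseteq x_1\fm\overline{R}\subseteq x_1R$ because $\fm=\mathfrak{C}$. That computation is correct and buys you a more self-contained proof: it bypasses both Observation~\ref{bb} and the nontrivial quasi-Buchsbaum-to-Buchsbaum input, at the price of needing $\overline{R}$ to be a maximal Cohen--Macaulay $R$-module.

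That last point is the one corner you cut (and the paper cuts the same corner): you assert that $\overline{R}$ is a product of \emph{two-dimensional} normal complete local domains, so that a system of parameters of $R$ is $\overline{R}$-regular. This amounts to $R$ being equidimensional, which does not follow from reduced, $(R_1)$ and $\dim R=2$. For instance $R=k[[x,y,z]]/(xz,yz)$ is complete, seminormal and $(R_1)$, but $\overline{R}=k[[x,y]]\times k[[z]]$ has a one-dimensional factor, $H^1_\fm(\overline{R})\neq 0$, $H^1_\fm(R)$ has infinite length, and $R$ is not Buchsbaum; the same hypothesis is hidden in the paper's use of $H^1_\fm(\overline{R})=0$ inside Observation~\ref{bb}. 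So your argument is valid in exactly the generality in which the statement itself is valid; you should state the equidimensionality assumption explicitly where you invoke regularity of the parameters on $\overline{R}$ (and where you claim $(0):_Rx_1=0$, which also uses that $x_1$ avoids every minimal prime).
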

\begin{proof}We may and do assume that the ring is not $(S_2)$, as otherwise it is Cohen-Macaulay and so  Buchsbaum.
	Recall from  $(R_1)$ condition that $\mathfrak{C}$ is $\fm$-primary. By seminormality, we know $\mathfrak{C}$ is a radical ideal in $\overline{R}$. Thus,  $\mathfrak{C}=\fm_{\overline{R}}$. Then $\mathfrak{C}=\mathfrak{C}\cap R=\fm_{\overline{R}}\cap R=\fm$. As $\mathfrak{C}=\Ann (H^1_\fm(R))$, it turns out that $R$ is  Buchsbaum.
\end{proof}

\begin{example}
	Let $R:=k[[X^2,Y^2,XY,X^2Y,XY^2]]$. Then
	
		\begin{enumerate}
		\item[i)] $R$ is seminomal,	\item[ii)] $\mathfrak{C}\nsubseteq \underline{x}R$ where $\underline{x}$  is a parameter sequence.	%\item[iii)]  $R$ is not $(R_1)$.
	\end{enumerate}

\end{example}
\begin{proof}
i) This is well-known.

ii) It is easy to see $\overline{R}=k[[X,Y]]$. Let $J:=(XY,X^2Y,XY^2)$. In order to show
	$J\subseteq \mathfrak{C}$ we note by the routine argument of Example \ref{3.9}(ii) that $J\overline{R}\subseteq R$. Suppose on the way of contradiction that 	 $\mathfrak{C}\subseteq(a,b )R$ where $a,b$ is a parameter sequence.
	Then $\fm\subseteq\langle J,X^2,Y^2\rangle\subseteq \langle a,b,X^2,Y^2\rangle\subseteq \fm$. From this we see $\langle a,b,X^2,Y^2\rangle= \fm$. But, $5=\mu(\fm)\leq4$, a contradiction.
\end{proof}

We close this section by presenting some higher dimensional examples.

\begin{example}
	Let $G$ be the set of all quadric monomials in $\{X,Y,Z,W\}$. Let $R:=k[[G\setminus\{ZW\}]]$. Then $\mathfrak{C}\nsubseteq \underline{x}R$ where $\underline{x}$  is a parameter sequence.
\end{example}

\begin{proof}
	It is easy to see $\overline{R}=k[[G]]$, the 2-Veronese subring of $k[[X,Y,Z,W]]$. Let $J:=\langle G\setminus\{ZW,Z^2,W^2\}\rangle_R$. In order to show
	$J\subseteq \mathfrak{C}$, we note by the routine argument presented in Example \ref{3.9}(ii), that $J\overline{R}\subseteq R$. Suppose on the way of contradiction that 	 $\mathfrak{C}\subseteq(a,b,c,d )R$ where $a,b,c,d$ is a parameter sequence.
	Then $$\fm\subseteq\langle J,Z^2,W^2\rangle\subseteq \langle a,b,c,d,Z^2,W^2\rangle\subseteq \fm.$$ From this we see  $9=\mu(\fm)\leq 6$, a contradiction.
\end{proof}

The following completes the proof of Theorem A) from introduction.

\begin{observation}\label{oneless}Let $R$ be analytically unramified containing a field and suppose
$\fm=\mathfrak{C}+y R$ for some $y$. Then $\mathfrak{C}\nsubseteq \underline{x}R$ where $\underline{x}$  is a system of parameter.
\end{observation}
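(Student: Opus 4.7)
My plan is to argue by contradiction and reduce the problem to the quasi-Gorenstein case already treated in Observation~\ref{qc}. Suppose $\mathfrak{C}\subseteq \underline{x}R$ for some system of parameters $\underline{x}=x_{1},\ldots,x_{d}$. Plugging into the hypothesis yields
$$\fm=\mathfrak{C}+yR\subseteq(x_{1},\ldots,x_{d},y)R\subseteq\fm,$$
so $\fm$ is generated by $d+1$ elements and $\embdim(R)\in\{d,d+1\}$. The first sub-case is immediate: if $\embdim(R)=d$ then $R$ is regular, hence normal, so $\mathfrak{C}=R$, which contradicts $\mathfrak{C}\subseteq\underline{x}R\subsetneq R$.

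The delicate sub-case is $\embdim(R)=d+1$, where the hypothesis that $R$ contains a field enters. Cohen's structure theorem presents the completion as $\widehat{R}\cong S/I$, with $S=k[[z_{1},\ldots,z_{d+1}]]$ regular local of dimension $d+1$ (matching the embedding dimension, so $I\subseteq\fm_{S}^{2}$). Since $R$ is analytically unramified, $\widehat{R}$ is reduced, hence $I$ is a radical ideal; and $\dim\widehat{R}=d$ forces $\Ht(I)=1$. In the UFD $S$, a radical equidimensional height-one ideal is the intersection of finitely many principal primes, which is itself principal via the $\lcm$. Thus $\widehat{R}=S/(f)$ is a hypersurface, in particular Gorenstein, and by faithfully flat descent along $R\to\widehat{R}$ the ring $R$ is Gorenstein, hence quasi-Gorenstein.

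At this point Observation~\ref{qc} applies and gives $\mathfrak{C}\nsubseteq\underline{x}R$, contradicting the standing assumption and finishing the argument. The main obstacle I anticipate is the passage ``$\embdim=d+1\Rightarrow$ hypersurface'': it relies on equidimensionality of $\widehat{R}$, which is automatic when $R$ is a domain but can fail in general for reduced $\widehat{R}$ having minimal primes of different heights. In such pathological cases one would first cut $\widehat{R}$ down to its top-dimensional part (quotienting by the intersection of the lower-dimensional minimal primes, and checking that this preserves the conductor-vs-parameter inclusion) before invoking Observation~\ref{qc}.
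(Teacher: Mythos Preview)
Your approach is essentially the same as the paper's: assume $\mathfrak{C}\subseteq\underline{x}R$, deduce $\fm=(x_1,\ldots,x_d,y)$, present the (completed) ring as a quotient of a $(d{+}1)$-dimensional power series ring by a radical height-one ideal, conclude it is a hypersurface, and then invoke Observation~\ref{qc}. The paper simply passes to the completion at the outset and writes $\pi:S=k[[X_1,\ldots,X_d,X]]\twoheadrightarrow R$, asserting $\ker(\pi)=(f)$; it does not separate out the regular sub-case or address the equidimensionality issue you flag---it just claims the kernel is principal. So your write-up is in fact more careful than the paper's on exactly the point you worry about.
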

\begin{proof}Without loss of generality we assume the ring is complete. In particular,
$R$ a reduced ring. Suppose on the way of contradiction that there is  some  system of parameter $\underline{x}:=x_1,\ldots,x_d$ so that $\mathfrak{C}\subseteq \underline{x}R$. Then $\fm=(x_1,\ldots,x_d,y)$. Let $S:=k[[X_1,\ldots,X_d,X]]$ and consider the natural projection $\pi:S\twoheadrightarrow R$. Since $R$ is reduced we see $\ker(\pi)$ is radical and of height one. Since $S$ is regular, $\ker(\pi)=(f)$ is principal. From this $R$ is hypersurface, in particular quasi-Gorenstein. This is in contradiction with 
Observation \ref{qc}.
\end{proof}

Here, we present some non-trivial examples for which $\fm=\mathfrak{C}+x R$:

\begin{example}
	Let $G$ be the set of all cubic monomials in $\{X,Y,Z\}$. Let $R:=k[[G\setminus\{X^2Y\}]]$. Then $\fm=\mathfrak{C}+X^3 R$. In particular, $\mathfrak{C}\nsubseteq \underline{x}R$ where $\underline{x}$  is a parameter sequence.	%\item[iii)]  $R$ is not $(R_1)$.
\end{example}

\begin{proof}
As $X^2Y=\frac{(XY^2)XYZ}{Y^2Z}$ we see $X^2Y\in Q(R)$.
Let $f(t):=t^2-(X^3)(XY^2)\in R[t]$. Since $f(X^2Y)=0$, we see $\overline{R}=k[[G]]$. Let $J:=\langle G\setminus\{X^2Y,X^3\}\rangle_R$. In order to show
	$J\subseteq \mathfrak{C}$ we show
	$\overline{R}J\subseteq R$. To this end, we show $X^2YJ\subseteq R$. Let us check this by looking at each seven generators of $J$:
\begin{enumerate}
\item[1)] $(X^2Y) Z^2Y=XY^2(Z^2X) \in J \subseteq R, \quad\quad\quad	 2)\ \  (X^2Y) Y^3=(XY^2)Y^3   \in J \subseteq R, $  	\item[3)] 	$(X^2Y) ZXY=(Y^2X) ZX^2\in J \subseteq R, \quad\quad\ \ 4)  \ \	 (X^2Y) Z^3=(ZX^2)Z^2Y \in J \subseteq R,$   	\item[5)] 	$(X^2Y)ZY^2=(Y^3) ZX^2 \in J \subseteq R,\quad\quad\ \ \ \ \ \ \   6)  \ \ 	 (X^2Y) Z^2X=(X^3)Z^2Y  \in J  \subseteq R,$  
\item[7)]  	$(X^2Y)  ZX^2=(X^3) XYZ \in J  \subseteq R$.
	\end{enumerate}
Thus, $\fm\subseteq J+X^3R\subseteq \mathfrak{C}+X^3R\subseteq \fm$.	This proves the first part. The particular case is in Observation \ref{oneless}.
\end{proof}

\begin{example}
	Let $k$ be a field, $n>1$ and $\{X_1,\ldots,X_n\}$ be indeterminate. Let $$R:=k[[X_1,\ldots,X_{n-1},X_{n}(X_1,\ldots,X_n)]]\subseteq k[[X_1,\ldots,X_n]].$$ Then $\fm=\mathfrak{C}+X_{n}^2 R$. In particular, $\mathfrak{C}\nsubseteq \underline{x}R$ where $\underline{x}$  is a parameter sequence.	%\item[iii)]  $R$ is not $(R_1)$.
\end{example}

\begin{proof}
As $X_n=\frac{X_nX_1}{X_1}$ we see $X_n\in Q(R)$.
Let $f(t):=t^2-X_n^2\in R[t]$. Since $f(X_n^2)=0$
we can deduce that  $\overline{R}=k[[X_1,\ldots,X_n]]$. Let $J:=\langle X_1,\ldots,X_{n-1},X_1X_n,\ldots,X_{n-1}X_n\rangle_R$. In order to show
$J\subseteq \mathfrak{C}$ we show
$\overline{R}J\subseteq R$. To this end, we show $X_nJ\subseteq R$. Let us check this by looking at each $2n-2$ generators of $J$:
\begin{enumerate}
\item[$\bullet_j$] $(X_n) X_j\in J \subseteq R$ for all $1\leq j<n$	\item[$\bullet_i$] $(X_n) X_iX_n=X_iX_n^2\in J \subseteq R$ for all $1\leq i<n$.
		
	\end{enumerate}
	Thus, $\fm\subseteq J+X_n^2R\subseteq \mathfrak{C}+X_n^2R\subseteq \fm$.	This proves the first part. The particular case is in Observation \ref{oneless}.
\end{proof}

%Next, consider the following motivational example:

\end{document}